\numberwithin{equation}{section}
\newtheorem{theorem}{Theorem}[section]
\newtheorem{corollary}[theorem]{Corollary}
\newtheorem{definition}[theorem]{Definition}
\newtheorem{claim}[theorem]{Claim}
\newtheorem{lemma}[theorem]{Lemma}
\begin{document}
\baselineskip=16pt

\title{On the $\alpha$-spectral radius of the $k$-uniform supertrees}

\author{\textbf{Chang Liu, Jianping Li\textsuperscript{\thanks{Corresponding author.}}}\\
	\small \emph{College of Liberal Arts and Sciences, National University of Defense Technology,}\\[-0.8ex]
	\small \emph{Changsha, Hunan, 410073, P. R. China}\\[-0.4ex]
		\small\tt clnudt19@126.com\\
		\small\tt lijianping65@nudt.edu.cn\\
	}

\date{\today}

\maketitle

\begin{abstract}
	Let $G$ be a $k$-uniform hypergraph with vertex set $V(G)$ and edge set $E(G)$. A connected and acyclic hypergraph is called a supertree. For $0\leq\alpha<1$, the $\alpha$-spectral radius of $G$ is the largest $H$-eigenvalue of $\alpha D(G)+(1-\alpha)A(G)$, where $D(G)$ and $A(G)$ are the diagonal tensor of the degrees and the adjacency tensor of $G$, respectively. In this paper, we determine the unique supertrees with the maximum $\alpha$-spectral radius among all $k$-uniform supertrees with $m$ edges and independence number $\beta$ for $\lceil\frac{m(k-1)+1}{k}\rceil\leq\beta\leq m$, among all $k$-uniform supertrees with given degree sequences, and among all $k$-uniform supertrees with $m$ edges and matching number $\mu$ for $1\leq\mu\leq\lfloor\frac{m(k-1)+1}{k}\rfloor$, respectively.
	\\[2pt]
	\textbf{AMS Subject Classification:} 05C50; 05C65; 15A42\\[2pt]
	\textbf{Keywords:} $\alpha$-spectral radius; uniform hypergraph; supertree; independence number; degree sequence; matching number.
\end{abstract}

\section{Introduction}
Let $G$ be a hypergraph on $n$ vertices with vertex set $V(G)=\{v_1,v_2,\dots,v_n\}$ and edge set $E(G)$. The element of $E(G)$ is a subset of $V(G)$, containing at least two vertices. If $|e|=k$ for each $e\in E(G)$, $G$ refers the $k$-uniform hypergraph. For $k=2$, $G$ is the ordinary graph. The degree of a vertex $v\in G$, denoted by $d_{G}(v)$, is defined as the number of edges in $E(G)$ containing $v$. For an edge $e=\{v_1,\dots,v_k\}$ in a $k$-uniform hypergraph $G$, if $d_{G}(v_1)\geq2$, $d_{G}(v_i)=1$ for $2\leq i\leq k$, we say that $e$ is a pendent edge of $G$ (at $v_1$). A vertex of degree one is a pendent vertex.

In a hypergraph, we say that two vertices $u,v\in V(G)$ are adjacent if there exists an edge $e\in E(G)$ such that $\{u,v\}\in e$, and say that two edges $e,f\in E(G)$ are adjacent if $e\cap f\ne\emptyset$. A path of length $l$ from $u$ to $v$ in $G$ is an alternating sequence of vertices and edges $u=v_1,e_1,\dots,v_{l},e_l,v_{l+1}=v$ such that $\{v_i,v_{i+1}\}\subseteq e_i$ for $1\leq i\leq l$, where  $v_1,v_2,\dots,v_{l+1}$ are distinct vertices and $e_1,e_2,\dots,e_l$ are distinct edges. For any $u,v\in V(G)$, if there exists a path from $u$ to $v$, then we say that $G$ is connected. The distance between two vertices $u$ and $v$ is the minimum length of a path in $G$ which connects $u$ and $v$, denoted by $d_{G}(u,v)$. A cycle of length $l$ is an alternating sequence $v_1,e_1,v_2,e_2,\dots,v_l,e_l,v_1$ of distinct vertices and edges such that $\{v_i,v_{i+1}\}\subseteq e_i$ for $1\leq i\leq l-1$ and $\{v_l,v_1\}\subseteq e_l$. If a hypergraph $G$ is both connected and acyclic, then we say that $G$ is a supertree. Two edges in a supertree share at most one common vertex. 

An independent set $S$ of a hypergraph $G$ is a subset of $V(G)$ such that any two vertices in $S$ are not contained in an edge of $G$. If for any other independent set $S'$ of $G$ such that $|S'|\leq|S|$, we say that $S$ is a maximum independent set of $G$. The independent number is the size of a maximum independent of $G$, denoted by $\beta(G)$ (or $\beta$ for short), i.e., $\beta(G)=\max\{|S|: S\mbox{ is a independent set of }G\}$. A matching $M$ of hypergraph $G$ is a subset of $E(G)$ which satisfies that any two edges in $M$ contain no same vertex. The matching number is the maximum of the cardinalities of all matchings of $G$, written as $\mu(G)$ (or $\mu$ for short), i.e., $\mu(G)=\max\{|M|:M\mbox{ is a matching of }G\}$. 

For positive integers $k$ and $n$, a tensor $\mathcal{T}=(\mathcal{T}_{i_1\dots i_k})$ of order $k$ and dimension $n$ refers to a multidimensional array with complex entries $\mathcal{T}_{i_1\dots i_k}$ for $i_j\in[n]=\{1,\dots,n\}$ and $j\in[k]$. A tensor $\mathcal{T}$ is called symmetric if its entry $\mathcal{T}_{i_1\dots i_k}$ is invariant under any permutation of its indices $i_1,\dots,i_k$.

In 2005, Qi \cite{Qi1} and Lim \cite{Lim1} independently introduced the definition of tensor eigenvalues. For a tensor $\mathcal{T}=(\mathcal{T}_{i_1\dots i_k})$ of order $k$ and dimension $n$ and a vector $x=(x_1,\dots,x_n)^\top$, $\mathcal{T}x^{k-1}$ is defined as a vector in $\mathbb{C}^n$ whose $i$-th entry equals
\begin{equation*}
	(\mathcal{T}x^{k-1})_i=\sum_{i_2,\dots,i_k=1}^{n}\mathcal{T}_{ii_2\dots i_k}x_{i_2}\cdots x_{i_k},
\end{equation*}
where $i\in[n]$. If there exists a number $\lambda\in\mathbb{C}$ and a nonzero vector $x\in\mathbb{C}^n$ such that 
\begin{equation*}
	\mathcal{T}x^{k-1}=\lambda x^{[k-1]},
\end{equation*} 
where $ x^{[k-1]}=\left( x_{1}^{k-1},\dots,x_{n}^{k-1}\right)^\top $, then $\lambda$ is called an eigenvalue of $\mathcal{T}$, and $x$ is called an eigenvector of $\mathcal{T}$ corresponding to the eigenvalue $\lambda$. By the definition of the general product of tensors \cite{Shao}, thus $\mathcal{T}x^{k-1}$ can simply written as $\mathcal{T}x$. Moreover, if both $\lambda$ and $x$ are real, then $\lambda$ is called an $H$-eigenvalue and $x$ is called an $H$-eigenvector of $\mathcal{T}$. The spectral radius of $\mathcal{T}$ is the maximum modulus of its eigenvalues, i.e., $\rho(\mathcal{T})=\max\{|\lambda|:\lambda \mbox{ is an eigenvalue of } \mathcal{T}\}$.

The adjacency tensor of a $k$-uniform hypergraph $G$ is defined as the order $k$ and dimension $n$ tensor $\mathcal{A}(G)=(a_{i_1\dots i_k})$, whose $(i_1\dots i_k)$-entry is
\begin{equation*}
	a_{i_1\dots i_k}=\begin{cases}
		\dfrac{1}{(k-1)!}, & \mbox{if }\{v_{i_1},\dots,v_{i_k}\}\in E(G), \\
		0, & \mbox{otherwise}.
	\end{cases}
\end{equation*}
See \cite{CoandDu,Lim2,PeandZh,XieandCh} for more details. In 2014, Qi \cite{Qi2} introduced a simple and natural definition for the Laplacian tensor $\mathcal{L}(G)$ and the signless Laplacian tensor $\mathcal{Q}(G)$ as $\mathcal{L}(G)=\mathcal{D}(G)-\mathcal{A}(G)$ and $\mathcal{Q}(G)=\mathcal{D}(G)+\mathcal{A}(G)$, where $\mathcal{A}(G)$ is the adjacency tensor of the hypergraph $G$ and $D(G)$ is the degree diagonal tensor of the hypergraph $G$. 

Motivated by the innovative work of Nikiforov \cite{Ni1}, Lin et al. \cite{Linandzh} proposed to study the convex linear combinations $\mathcal{A}_{\alpha}(G)$ of $\mathcal{D}(G)$ and $\mathcal{A}(G)$ defined by
\begin{equation*}
	\mathcal{A}_{\alpha}(G)=\alpha\mathcal{D}(G)+(1-\alpha)\mathcal{A}(G),
\end{equation*}
where $0\leq\alpha<1$. It's clear that $\mathcal{A}_0(G)$ is the adjacency tensor and $\mathcal{A}_{\frac{1}{2}}(G)$ is essentially equivalent to signless Laplacain tensor. The $\alpha$-spectral radius of $G$ is the spectral radius of $\mathcal{A}_{\alpha}(G)$, denoted by $\rho_{\alpha}(G)$. For a connected $k$-uniform hypergraph $G$, $\mathcal{A}_{\alpha}(G)$ is real and symmetric, thus $\rho_{\alpha}(G)$ is the largest $H$-eigenvalue of $\mathcal{A}_{\alpha}(G)$. In this case, $\mathcal{A}_{\alpha}(G)$  is weakly irreducible and there is a unique $k$-unit positive $H$-eigenvector $x$ corresponding to $\rho_{\alpha}(G)$ (see \cite{Guo and Zhou}), we call $x$ the $\alpha$-Perron vector of $G$. 

There have been many studies on the spectral radii of $k$-uniform hypergraph $G$ in the past decades. Li et al. \cite{LiShao and Qi} introduced the operation "moving edges" for hypergraphs and proved that the hyperstar attains uniquely the largest spectral radius and the largest signless Laplacian spectral radius among all $k$-uniform supertrees. Xiao et al. \cite{XiaoWan and Lu} determined the unique supertree with the maximum spectral radius among all $k$-uniform supertrees with given degree sequences. Xiao et al. \cite{XiaoWan and Du} characterize the first two largest spectral radii of $k$-uniform supertrees with given diameter. Guo and Zhou \cite{GuoanZhou srh} characterized the extremal supertrees with maximum spectral radius among all $k$-uniform supertrees with a given independent number or a given matching number. More works can be referred to \cite{Kang and Yuan,OuyangQi,Yuan Si}.

Recently, the research on extremal $\alpha$-spectral radius of uniform hypergraphs has attracted much attention. Lin et al. \cite{Linandzh} proposed some upper bounds on $\alpha$-spectral radius of connected irregular $k$-uniform hypergraphs. Guo and Zhou \cite{Guo and Zhou} introduced some transformations which increase the $\alpha$-spectral radius, and determined the unique hypergraph with the largest $\alpha$-spectral radius in some classes of uniform hypergraphs. Wang et al. \cite{DouWang} showed how the $\alpha$-spectral changes under the edge grafting operations on connected $k$-uniform hypergraphs, and obtained the extremal supertree for $\alpha$-spectral radius among $k$-uniform non-caterpillar hypergraphs with given order, size and diameter. In 2022, Kang et al. \cite{Kang and Shan} determined the unique unicyclic hypergraphs with the maximum $\alpha$-spectral radius among all $k$-uniform unicyclic hypergraphs with fixed diameter, and among all $k$-uniform unicyclic hypergraphs with given number of pendent edges. 

Motivated by \cite{GuoanZhou srh,Guo and Zhou,LiShao and Qi,XiaoWan and Lu}, we want to establish more extremal properties of the $\alpha$-spectral radius of $k$-uniform supertrees. In this paper, we determine the unique supertrees with the maximum $\alpha$-spectral radius among all $k$-uniform supertrees with $m$ edges and independence number $\beta$ for $\lceil\frac{m(k-1)+1}{k}\rceil\leq\beta\leq m$, among all $k$-uniform supertrees with given degree sequences, and among all $k$-uniform supertrees with $m$ edges and matching number $\mu$ for $1\leq\mu\leq\lfloor\frac{m(k-1)+1}{k}\rfloor$, respectively.

\section{Preliminaries}

Firstly, we introduce the formula for calculating the spectral radius of a nonnegative symmetric tensor $\mathcal{T}$ of order $k$ and dimension $n$, which is
\begin{equation*} 
	\rho(\mathcal{T})=\max\{x^{\top}(\mathcal{T}x)|x\in\mathbb{R}^{n}_{+},\Vert x\Vert_k=1\}.
\end{equation*}

For the $\alpha$-spectral radius of a connected $k$-uniform hypergraph $G$, we have
\begin{equation*}
	\rho_{\alpha}(G)=\max\{x^{\top}(\mathcal{A}_{\alpha}x)|x\in\mathbb{R}^{n}_{+},\Vert x\Vert_k=1\},
\end{equation*}
where $x$ is the $\alpha$-Perron vector of $G$. Let $x_{U}=\Pi_{v\in U}x_{v}$ for a subset $U\in V(G)$. Then for each $v\in V(G)$, we see that
\begin{equation*}
	\left( \mathcal{A}_{\alpha}(G)x\right)_{v}=\alpha d_{G}(v)x_{v}^{k-1}+(1-\alpha)\sum_{e\in E(G)}x_{e\backslash\{v\}}, 
\end{equation*}
and
\begin{equation*}
	\begin{split}
		x^{\top}\left( \mathcal{A}_{\alpha}(G)x\right)&=\alpha\sum_{v\in V(G)} d_{G}(v)x_{v}^{k}+(1-\alpha)k\sum_{e\in E(G)}x_{e}\\
		&=\sum_{e\in E(G)}\left( \alpha\sum_{v\in e}x_{v}^{k}+(1-\alpha)kx_{e}\right).
	\end{split}
\end{equation*}

Next, we introduce some operations for hypergraphs.
\begin{definition}[edge moving operation]\cite{LiShao and Qi}\label{def1}
	Let $r\geq1$ be a positive integer and $G$ be a $k$-uniform hypergraph with $u,v_1,\dots,v_r\in V(G)$ and $e_1,\dots,e_r\in E(G)$ such that $u\notin e_i$ and $v_i\in e_i$ for $1\leq i\leq r$. The vertices $v_1,\dots,v_r$ are not necessarily pairwise distinct. Let $e'_i=(e_i\backslash\{v_i\})\cup\{u\}$ for $1\leq i\leq r$ and $G'$ be the hypergraph with $V(G')=V(G)$ and $E(G')=(E(G)\backslash\{e_1,\dots,e_r\})\cup\{e'_1,\dots,e'_r\}$. Then we say that $G'$ is obtained from $G$ by moving edges $(e_1,\dots,e_r)$ from $(v_1,\dots,v_r)$ to $u$.
\end{definition}

The edge-releasing operation on supertrees is a special case of edge moving operation. 
\begin{definition}[edge-releasing operation]\cite{LiShao and Qi}\label{def2}
	Let $G$ be a $k$-uniform supertree with $u\in e\in E(G)$, where $e$ is a non-pendent edge. Let $e_1,\dots,e_r$ be all edges of $G$ adjacent to $e$ but not containing $u$. Suppose that $e_i\cap e=\{v_i\}$ for $1\leq i\leq r$. Let $G'$ be the hypergraph obtained from $G$ by moving edges ($e_1,\dots,e_r$) from ($v_1,\dots,v_r$) to $u$. Then we say that $G'$ is obtained from $G$ by an edge-releasing operation on $e$ at $u$. 
\end{definition}
	By Proposition 17 in \cite{LiShao and Qi}, we see that the hypergraph $G'$ obtained from a $k$-uniform supertree $G$ by edge-releasing a non-pendent edge $e$ of $G$ is also a supertree.
\begin{definition}[2-switching operation]\cite{Behrens,XiaoWan and Lu}\label{def3}
	Let $G$ be a $k$-uniform hypergraph with $e=\{u_1,\dots,u_k\}\in E(G)$ and $f=\{v_1,\dots,v_k\}\in E(G)$. Let $U_1=\{u_1,\dots,u_r\}$, $V_1=\{v_1,\dots,v_r\}$ for $1\leq r<k$, and let $e'=(e\backslash U_1)\cup V_1$, $f'=(f\backslash V_1)\cup U_1$. Let $G'$ be the hypergraph with $V(G')=V(G)$ and $E(G')=\left( E(G)\backslash\{e,f\}\right)\cup\{e',f'\} $. Then we say that $G'$ is obtained from $G$ by $e\xrightleftharpoons[v_1,\dots,v_r]{u_1,\dots,u_r}f$ or $e\xrightleftharpoons[V_1]{U_1}f$.
\end{definition}

Notice that the degrees of the vertices remain unchanged by applying a 2-switch operation to a hypergraph. In the following, we illustrate the effect of these operations on the $\alpha$-spectral radius of hypergraphs.

\begin{lemma}\label{moved}\cite{Guo and Zhou}
	Let $G'$ be the hypergraph obtained from a connected $k$-uniform hypergraph $G$ by moving edge $(e_1,\dots,e_r)$ from $(v_1,\dots,v_r)$ to $u$, where $r\geq1$. Let $x$ be the $\alpha$-Perron vector of $G$. If $x_u\geq\max\limits_{1\leq i\leq r}x_{v_i}$, then $\rho_{\alpha}(G')>\rho_{\alpha}(G)$.
\end{lemma}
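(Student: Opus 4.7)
The plan is to use the variational characterization $\rho_{\alpha}(G)=\max\{y^{\top}(\mathcal{A}_{\alpha}y): y\in\mathbb{R}^{n}_{+},\Vert y\Vert_k=1\}$ with the $\alpha$-Perron vector $x$ of $G$ itself serving as a test vector for $G'$. Since $G$ and $G'$ differ only in that the edge $e_i$ is replaced by $e'_i=(e_i\backslash\{v_i\})\cup\{u\}$ for $1\leq i\leq r$, I would invoke the closed form
\[
x^{\top}\mathcal{A}_{\alpha}(G)x=\sum_{e\in E(G)}\left(\alpha\sum_{v\in e}x_v^k+(1-\alpha)kx_e\right)
\]
to evaluate the edge-by-edge difference explicitly: each moved edge contributes
\[
\alpha(x_u^k-x_{v_i}^k)+(1-\alpha)k\cdot x_{e_i\backslash\{v_i\}}(x_u-x_{v_i}).
\]
By the hypothesis $x_u\geq x_{v_i}$ and strict positivity of the Perron vector, every such contribution is nonnegative, yielding $\rho_{\alpha}(G')\geq x^{\top}\mathcal{A}_{\alpha}(G')x\geq x^{\top}\mathcal{A}_{\alpha}(G)x=\rho_{\alpha}(G)$.

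To upgrade this to strict inequality, I would split into two cases. If $x_u>x_{v_i}$ for some $i$, the corresponding contribution above is already strictly positive (since $x_{e_i\backslash\{v_i\}}>0$), immediately giving $\rho_{\alpha}(G')>\rho_{\alpha}(G)$. Otherwise $x_u=x_{v_i}$ for all $i$, and here I would argue by contradiction: assuming $\rho_{\alpha}(G')=\rho_{\alpha}(G)$, the vector $x$ attains the maximum in the variational formula for $G'$; combined with weak irreducibility of $\mathcal{A}_{\alpha}(G')$ and uniqueness of the positive Perron vector, $x$ is forced to be an $\alpha$-Perron vector of $G'$ as well. Writing the eigenvalue equation at $u$ for both $G$ and $G'$, using $e'_i\backslash\{u\}=e_i\backslash\{v_i\}$ and $d_{G'}(u)=d_G(u)+r$, produces
\[
(\mathcal{A}_{\alpha}(G')x)_u=(\mathcal{A}_{\alpha}(G)x)_u+\alpha r\, x_u^{k-1}+(1-\alpha)\sum_{i=1}^{r}x_{e_i\backslash\{v_i\}},
\]
so both sides equaling $\rho_{\alpha}(G)x_u^{k-1}$ forces $\alpha r x_u^{k-1}+(1-\alpha)\sum_{i=1}^{r}x_{e_i\backslash\{v_i\}}=0$, contradicting positivity of $x$ together with $r\geq 1$.

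The hard part is the delicate equality case $x_u=x_{v_i}$ for all $i$: to conclude that $x$ is actually an $\alpha$-Perron vector of $G'$ one needs weak irreducibility of $\mathcal{A}_{\alpha}(G')$, which in turn relies on $G'$ being connected, and this deserves a separate verification. A clean alternative that sidesteps this technicality entirely is to perturb $x$ at $u$ directly, replacing $x_u$ by $x_u+\varepsilon$ for small $\varepsilon>0$ and renormalizing; the first-order effect on $y^{\top}\mathcal{A}_{\alpha}(G')y$ then captures precisely the extra quantity $\alpha r x_u^{k-1}+(1-\alpha)\sum_i x_{e_i\backslash\{v_i\}}$ coming from the increased degree at $u$, pushing the Rayleigh quotient strictly above $\rho_{\alpha}(G)$ without invoking any Perron--Frobenius uniqueness for $G'$.
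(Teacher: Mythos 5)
This lemma is stated in the paper with a citation to \cite{Guo and Zhou} rather than proved there, and your argument --- plugging the $\alpha$-Perron vector of $G$ into the Rayleigh-quotient characterization of $\rho_\alpha(G')$, computing the edge-by-edge difference, and handling the equality case by comparing the eigenvalue equations at $u$ --- is exactly the proof in that reference, and it is correct. The technicality you flag is harmless: since $x>0$, a maximizer of the Rayleigh quotient for $\mathcal{A}_\alpha(G')$ is already an $H$-eigenvector by the Lagrange condition alone, so neither weak irreducibility nor connectedness of $G'$ (nor your perturbation workaround, which is also valid) is actually needed.
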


\begin{lemma}\label{edgere}
	Let $G'$ be the supertree obtained from a $k$-uniform supertree $G$ by an edge-releasing operation on a non-pendent edge $e\in E(G)$ to a vertex $u\in e$. Then $\rho_{\alpha}(G')>\rho_{\alpha}(G)$.
\end{lemma}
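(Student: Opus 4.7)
The plan is to realize the edge-releasing operation as a particular instance of the edge-moving operation from Definition~\ref{def1}, and then to invoke Lemma~\ref{moved}. First I would let $x$ denote the $\alpha$-Perron vector of $G$ and, as is implicit in statements of this type, take the release vertex $u\in e$ to satisfy $x_u=\max_{v\in e}x_v$; this is the natural (and essentially the only) choice for which the resulting supertree has strictly larger $\alpha$-spectral radius.

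Next I would check that the operation is nontrivial, i.e., that the family of edges $e_1,\dots,e_r$ adjacent to $e$ but not containing $u$ satisfies $r\geq 1$. Since $e$ is non-pendent, $e$ contains at least one vertex $w\neq u$ with $d_G(w)\geq 2$, so $w$ belongs to some further edge $e'\neq e$; because $G$ is a supertree any two edges share at most one vertex, forcing $e\cap e'=\{w\}$ and in particular $u\notin e'$. Thus $e'$ witnesses $r\geq 1$.

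With this in place, Definitions~\ref{def1} and \ref{def2} show that $G'$ is exactly the hypergraph obtained from $G$ by moving the edges $(e_1,\dots,e_r)$ from their attachment vertices $(v_1,\dots,v_r)\subseteq e\setminus\{u\}$ to $u$; Proposition~17 in \cite{LiShao and Qi}, cited after Definition~\ref{def2}, guarantees that $G'$ is again a supertree. Because $u$ was chosen to maximize $x$ on $e$, we have $x_u\geq x_{v_i}$ for every $1\leq i\leq r$, so the hypothesis of Lemma~\ref{moved} is satisfied and the strict inequality $\rho_\alpha(G')>\rho_\alpha(G)$ follows at once.

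The only substantive point in the whole argument is the nontriviality verification $r\geq 1$, which uses both the non-pendent hypothesis on $e$ and the acyclicity of $G$; the rest is a matter of matching the edge-releasing bookkeeping with the more general edge-moving setup. I do not anticipate any real obstacle, since the heavy lifting has been done once and for all in Lemma~\ref{moved}.
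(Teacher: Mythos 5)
There is a genuine gap, and it is exactly the point your opening sentence waves away. The lemma asserts $\rho_\alpha(G')>\rho_\alpha(G)$ for an \emph{arbitrary} release vertex $u\in e$, and the paper really uses this generality later: in the independence-number argument the release is performed at a prescribed vertex ($v$ or $v_1$), and in the matching-number argument at an arbitrarily chosen $u\in e$, none of which is known to maximize the $\alpha$-Perron vector on $e$. Your proof only treats the case $x_u=\max_{v\in e}x_v$, and your parenthetical claim that this is ``essentially the only'' choice yielding a strict increase is in fact false — which is fortunate, because the truth of the opposite is what rescues the lemma. The missing observation (and the paper's one-line fix) is that releasing $e$ at any vertex of $e$ produces the same supertree up to isomorphism: in every case all subtrees attached to $e$ at its various vertices get reattached at a single vertex of $e$, while the other $k-1$ vertices of $e$ become pendent; since two edges of a supertree meet in at most one vertex, the map swapping $u$ with the maximizing vertex and fixing all other vertices is an isomorphism between the two released supertrees. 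Hence $\rho_\alpha(G')=\rho_\alpha(G'')>\rho_\alpha(G)$, where $G''$ is the release at the maximizer, which is what Lemma~\ref{moved} gives you directly.

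Apart from this omission your route is the same as the paper's: realize the edge-releasing operation of Definition~\ref{def2} as an instance of the edge-moving operation of Definition~\ref{def1} and apply Lemma~\ref{moved} at the vertex of $e$ with maximal Perron entry. Your verification that $r\geq1$ (a vertex $w\in e\setminus\{u\}$ of degree at least $2$ lies in some edge $e'\neq e$, and acyclicity forces $u\notin e'$) is correct and is a detail the paper leaves implicit, so it is worth keeping; just add the isomorphism step so that the statement is proved for every $u\in e$, not only for the maximizing one.
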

\begin{proof}
	Let $x$ be the $\alpha$-Perron vector corresponding to $\rho_{\alpha}(G)$. Take a non-pendent edge $e\in E(G)$ and a vertex $v\in e$ such that $x_v=\max\limits_{v'\in e}\{x_{v'}\}$. Let $G''$ be the supertree obtained from a $k$-uniform supertree $G$ by an edge-releasing operation on $e$ to $v$. By Definition \ref{def2} and Lemma \ref{moved}, we have $\rho_{\alpha}(G'')>\rho_{\alpha}(G)$. Note that supertrees $G'$ and $G''$ are isomorphic, thus $\rho_{\alpha}(G')=\rho_{\alpha}(G'')>\rho_{\alpha}(G)$. This completes the proof.
\end{proof}

\begin{lemma}\label{2sw}\cite{DouWang}
	Let $G$ be a connected $k$-uniform hypergraph with $e=\{u_1,\dots,u_k\}\in E(G)$ and $f=\{v_1,\dots,v_k\}\in E(G)$. Let $U_1=\{u_1,\dots,u_r\}$, $V_1=\{v_1,\dots,v_r\}$ for $1\leq r<k$ and let $U_2=e\backslash U_1$, $V_2=f\backslash V_1$. Suppose that $e'=U_1\cup V_2$ and $f'=V_1\cup U_2$ are $k$-subsets of $V(G)$ and not in $E(G)$. Let $G'$ be obtained from $G$ by $e\xrightleftharpoons[V_1]{U_1}f$ and $x$ be the $\alpha$-Perron vector of $G$. If $x_{U_1}\geq x_{V_1}$ and $x_{U_2}\leq x_{V_2}$, then $\rho_{\alpha}(G')\geq\rho_{\alpha}(G)$. Equality holds if and only if $x_{U_1}=x_{V_1}$ and $x_{U_2}=x_{V_2}$.
\end{lemma}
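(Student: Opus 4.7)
The plan is to apply the variational characterization $\rho_\alpha(H)=\max\{x^\top(\mathcal{A}_\alpha(H)x):x\in\mathbb{R}^n_+,\,\|x\|_k=1\}$ recalled at the start of Section 2, using the $\alpha$-Perron vector $x$ of $G$ as a test vector for $G'$. A 2-switch preserves every vertex degree, so the diagonal piece $\alpha\sum_{v}d(v)x_v^k$ agrees between $G$ and $G'$, and $E(G')$ differs from $E(G)$ only by swapping $\{e,f\}$ for $\{e',f'\}$. The edge-sum formula therefore yields
\begin{equation*}
x^\top(\mathcal{A}_\alpha(G')x)-x^\top(\mathcal{A}_\alpha(G)x)=(1-\alpha)k\bigl(x_{e'}+x_{f'}-x_e-x_f\bigr).
\end{equation*}
Substituting $x_e=x_{U_1}x_{U_2}$, $x_f=x_{V_1}x_{V_2}$, $x_{e'}=x_{U_1}x_{V_2}$, and $x_{f'}=x_{V_1}x_{U_2}$, the right-hand bracket factors cleanly as $(x_{U_1}-x_{V_1})(x_{V_2}-x_{U_2})\geq 0$ by hypothesis. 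Combining with $x^\top\mathcal{A}_\alpha(G)x=\rho_\alpha(G)$ then gives $\rho_\alpha(G')\geq x^\top(\mathcal{A}_\alpha(G')x)\geq\rho_\alpha(G)$.

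For the \textbf{if} half of the equality case, when both $x_{U_1}=x_{V_1}$ and $x_{U_2}=x_{V_2}$ the four products $x_e,x_f,x_{e'},x_{f'}$ all collapse to $x_{U_1}x_{U_2}$, and I would check vertex-by-vertex that at each $v\in V(G)$ the deleted contributions from $e,f$ are matched exactly by the inserted contributions from $e',f'$. Hence $\mathcal{A}_\alpha(G')x=\mathcal{A}_\alpha(G)x=\rho_\alpha(G)x^{[k-1]}$, and since $x>0$ this forces $\rho_\alpha(G')=\rho_\alpha(G)$.

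For the converse, suppose $\rho_\alpha(G')=\rho_\alpha(G)$; then the chain of inequalities collapses, forcing $(x_{U_1}-x_{V_1})(x_{V_2}-x_{U_2})=0$, and by symmetry I may assume $x_{U_1}=x_{V_1}$. Since $x>0$ attains the variational maximum for $G'$, a Lagrange-multiplier argument (which avoids needing weak irreducibility of $\mathcal{A}_\alpha(G')$) yields $\mathcal{A}_\alpha(G')x=\rho_\alpha(G)x^{[k-1]}=\mathcal{A}_\alpha(G)x$. I would then pick a vertex $v\in U_1\setminus V_1$, which is nonempty because $U_1=V_1$ would force $e'=f\in E(G)$, contradicting the hypothesis $e'\notin E(G)$; moreover $v\notin U_2\cup V_2$ since $U_1\cap U_2=\emptyset$ (from the partition $e=U_1\sqcup U_2$) and $U_1\cap V_2=\emptyset$ (from $|U_1\cup V_2|=k$). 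Evaluating $(\mathcal{A}_\alpha(G')-\mathcal{A}_\alpha(G))x$ at this $v$ cancels all unchanged edges and reduces to $(1-\alpha)x_{U_1\setminus\{v\}}(x_{V_2}-x_{U_2})=0$, delivering $x_{U_2}=x_{V_2}$.

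The main obstacle will be this converse half of the equality case: locating a witness vertex $v\in U_1\setminus V_1$ disjoint from $U_2\cup V_2$ (which is the one place where the hypothesis $e'\notin E(G)$ is really used) and justifying that any positive maximizer of the variational problem is automatically an eigenvector of the tensor, so that one does not need to invoke weak irreducibility of $\mathcal{A}_\alpha(G')$ (which could fail if the 2-switch disconnects $G$).
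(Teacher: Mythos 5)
The paper does not prove Lemma \ref{2sw} itself but quotes it from \cite{DouWang}, and your argument is essentially the proof given there: the same Rayleigh-quotient computation $x^{\top}(\mathcal{A}_{\alpha}(G')x)-x^{\top}(\mathcal{A}_{\alpha}(G)x)=(1-\alpha)k\bigl(x_{U_1}-x_{V_1}\bigr)\bigl(x_{V_2}-x_{U_2}\bigr)\geq 0$ (degrees being unchanged by the 2-switch), followed in the equality case by comparing $\mathcal{A}_{\alpha}(G')x$ with $\mathcal{A}_{\alpha}(G)x$ at a witness vertex of $U_1\setminus V_1$ (respectively $U_2\setminus V_2$), whose existence is exactly where $e',f'\notin E(G)$ is used. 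Your proposal is correct; the only ingredients you leave implicit --- that a positive maximizer of the variational problem for the symmetric nonnegative tensor $\mathcal{A}_{\alpha}(G')$ satisfies the $H$-eigenvalue equation, and that an eigenvalue with a positive eigenvector of a nonnegative tensor equals its spectral radius (the Collatz--Wielandt bound), which justifies the ``if'' half without weak irreducibility of $\mathcal{A}_{\alpha}(G')$ --- are standard facts and are what the cited proof relies on as well.
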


\section{The $\alpha$-spectral radius of supertrees with a given independence number}
In this section, we mainly determine the unique supertree with the maximum $\alpha$-spectral radius among all $k$-uniform supertrees with a given independence number. For a $k$-uniform supertree $G$ with $m$ edges and independence number $\beta$, it follows that $k\beta\geq|V(G)|=n=m(k-1)+1$, implying $\beta\geq\frac{m(k-1)+1}{k}$.   
\begin{lemma}\label{le3_1}\cite{GuoanZhou srh}
	There exists a maximum independence set $S$ of a supertree $G$ such that there is a pendent vertex in $S$ for each pendent edge of $G$.
\end{lemma}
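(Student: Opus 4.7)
The plan is to prove this by an extremal-set argument. Among all maximum independent sets of $G$, I would choose $S$ so as to maximize the quantity
\[
\Phi(S)=\#\bigl\{e\in E(G):e\text{ is a pendent edge and }e\text{ contains a pendent vertex in }S\bigr\}.
\]
I then want to show $\Phi(S)$ already equals the total number of pendent edges of $G$; suppose otherwise, and fix a pendent edge $e=\{v_1,v_2,\dots,v_k\}$ with attaching vertex $v_1$ (so $d_G(v_1)\geq 2$) and pendent vertices $v_2,\dots,v_k$ such that $\{v_2,\dots,v_k\}\cap S=\emptyset$.

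The first step is to observe that necessarily $v_1\in S$: otherwise $e\cap S=\emptyset$, and since $v_2$ has degree $1$ and lies only in $e$, the set $S\cup\{v_2\}$ would still be independent, contradicting the maximality of $|S|$. Next, I would form the swap
\[
S'=(S\setminus\{v_1\})\cup\{v_2\}
\]
and verify independence: for the edge $e$ itself, $S'\cap e=\{v_2\}$; for any other edge $f$, one has $v_2\notin f$ (degree of $v_2$ is $1$), so $S'\cap f\subseteq(S\cap f)\setminus\{v_1\}$ and hence $|S'\cap f|\leq 1$. Since $|S'|=|S|$, the set $S'$ is again a maximum independent set.

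The last step is to compare $\Phi(S')$ with $\Phi(S)$. The edge $e$ now contributes to $\Phi(S')$ because $v_2\in S'$ is a pendent vertex of $e$. For any other pendent edge $f$ with attaching vertex $w_f$, its pendent vertices form $f\setminus\{w_f\}$, which contains neither $v_1$ (since $d_G(v_1)\geq 2$, so $v_1$ is not a pendent vertex of any edge) nor $v_2$ (since $v_2$ lies only in $e$). Hence $S'\cap(f\setminus\{w_f\})=S\cap(f\setminus\{w_f\})$, so the contribution of $f$ to $\Phi$ is preserved. Therefore $\Phi(S')\geq\Phi(S)+1$, contradicting the extremal choice of $S$.

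The only subtle point — and the one place where the supertree/pendent hypotheses are essential — is ensuring the swap does not destroy a pendent-vertex witness for some other pendent edge. This is precisely where I use that $v_1$ has degree $\geq 2$ (so it is never itself a pendent vertex of another pendent edge) and that $v_2$ is genuinely pendent (so adding it cannot create a new conflict outside $e$); beyond that, the argument is a routine swap.
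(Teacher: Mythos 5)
Your proof is correct. Note that the paper does not actually prove this lemma---it is imported from Guo and Zhou's paper on the spectral radius of uniform hypertrees---so there is no in-paper argument to compare against; your extremal choice of $S$ maximizing $\Phi$ together with the swap $S'=(S\setminus\{v_1\})\cup\{v_2\}$ is the standard exchange argument for this statement, and the key verifications are all present: $v_1\in S$ follows from maximality of $|S|$ (else $S\cup\{v_2\}$ is independent), the swap preserves independence because $v_2$ lies only in $e$, and no witness for another pendent edge is lost because $v_1$, having degree at least $2$, is never a pendent vertex. As a side remark, your argument uses neither connectedness nor acyclicity, so it proves the statement for arbitrary $k$-uniform hypergraphs, not just supertrees.
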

	A $k$-uniform hyperstar with $m$ edges is written as $S_{m,k}$. In \cite{GuoanZhou srh}, Guo and Zhou characterized the extremal $k$-uniform supertrees with maximum spectral radius and independent number $\beta$, shown in the following definition.
\begin{definition}\cite{GuoanZhou srh}
	Let $T_{m,k,\beta}$ be a $k$-uniform supertree obtained from a hyperstar $S_{(k-1)\beta-(k-2)m,k}$ by attaching a pendent edge at each pendent vertex of $m-\beta$ edges in $S_{(k-1)\beta-(k-2)m,k}$ , where $m,k,\beta$ are three positive integers satisfying $k\geq2$ and $\lceil\frac{m(k-1)+1}{k}\rceil\leq\beta\leq m$.
\end{definition}
\begin{figure}[htbp]
	\centering
	\subfigure[Supertree $T_{8,3,6}$]{
		\includegraphics[width=6.8cm]{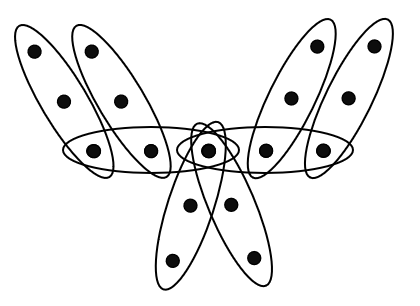}
	}
	\quad
	\subfigure[Supertree $T_{5,4,4}$]{
		\includegraphics[width=5.8cm]{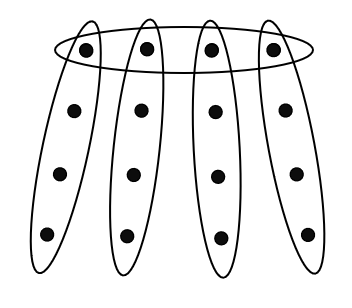}
	}
	\caption{Two supertrees in $T_{m,k,\beta}$.}
	\label{fig1}
\end{figure}

\begin{theorem}
	For $\lceil\frac{m(k-1)+1}{k}\rceil\leq\beta\leq m$, $T_{m,k,\beta}$ is the unique supertree with maximum $\alpha$-spectral radius among all $k$-uniform supertrees with $m$ edges and independence number $\beta$.
\end{theorem}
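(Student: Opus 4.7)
The plan is to characterize the extremal supertree by successively applying Lemmas \ref{moved}, \ref{edgere}, and \ref{2sw}. Let $T$ be a $k$-uniform supertree with $m$ edges and $\beta(T)=\beta$ achieving the maximum $\alpha$-spectral radius, let $x$ be its $\alpha$-Perron vector, and pick $u^{*}\in V(T)$ with $x_{u^{*}}=\max_{v}x_{v}$. By Lemma \ref{le3_1}, fix a maximum independent set $S$ containing a pendent vertex of each pendent edge. A direct count in $T_{m,k,\beta}$ shows that it has exactly $\beta$ pendent edges and exactly $m-\beta$ non-pendent edges, all sharing a common center vertex; the aim is to match these three structural features for $T$.

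The first reduction is to show that the number $p$ of pendent edges of $T$ equals $\beta$. Lemma \ref{le3_1} gives $p\leq\beta$; if $p<\beta$, then $S$ contains a vertex $w$ lying in a non-pendent edge $e$, and I would exploit this ``slack'' by an edge-moving operation (Lemma \ref{moved}) along $e$ toward $u^{*}$, paired with a 2-switch (Lemma \ref{2sw}) that restores the cardinality of $S$, yielding a strict increase of $\rho_{\alpha}$ and contradicting extremality. The second reduction is to show all $m-\beta$ non-pendent edges of $T$ contain $u^{*}$: if some non-pendent edge $e$ avoids $u^{*}$, let $v$ be the vertex of $e$ closest to $u^{*}$; then an edge-releasing on $e$ at $v$ (Lemma \ref{edgere}) followed by moving the resulting pendent edge onto $u^{*}$ (Lemma \ref{moved}, applicable since $x_{u^{*}}$ is maximal) strictly increases $\rho_{\alpha}$ while keeping the pendent-edge count, and hence $\beta$, fixed.

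At this stage $T$ consists of $m-\beta$ non-pendent edges at $u^{*}$ together with $\beta$ pendent edges, each either incident to $u^{*}$ or attached at a non-central vertex of one of the non-pendent edges. A final 2-switching argument (Lemma \ref{2sw}) shows that the $\rho_{\alpha}$-optimal arrangement is to cluster all attached pendent edges onto the $k-1$ joints of the same $m-\beta$ non-pendent edges at $u^{*}$, rather than spreading them across joints of different edges: swapping across two such edges between a pendent end-vertex at a ``less crowded'' joint and one at a ``more crowded'' joint does not decrease $\rho_{\alpha}$, and is strict whenever the swap is nontrivial. Combined with the degree identity $d_{T}(u^{*})=(k-1)\beta-(k-2)m$ forced by $p=\beta$, this yields $T\cong T_{m,k,\beta}$.

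The main obstacle throughout is that edge-releasing and edge-moving can in general change the independence number, so each $\rho_{\alpha}$-increasing operation must be paired with a controlled 2-switch (or with a reselection of the maximum independent set via Lemma \ref{le3_1}) that restores $\beta$. This independence-number bookkeeping, particularly during the reduction that forces all non-pendent edges to the single center $u^{*}$, is the delicate part of the proof and is where most of the case analysis will be concentrated.
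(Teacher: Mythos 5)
Your toolkit is the same as the paper's (Lemmas \ref{moved}, \ref{edgere}, \ref{2sw} together with Lemma \ref{le3_1}), and the target structure you describe for the extremal supertree ($\beta$ pendent edges, $m-\beta$ non-pendent edges all through one center of degree $(k-1)\beta-(k-2)m$) is the correct one. But the outline leaves unproved exactly the steps that carry the weight. The first reduction (``the extremal $T$ has exactly $\beta$ pendent edges'') is only asserted via an unspecified edge move toward $u^{*}$ ``paired with a 2-switch that restores the cardinality of $S$''; no such pairing is constructed, and it is not clear one exists. The paper never repairs the independence number after the fact: it proves (Claim \ref{ca1} and the surrounding case analysis, after stripping all pendent-edge pendent vertices to form $\Gamma$) that every vertex of $\Gamma$ of degree at least two lies in a pendent edge of $G$, by choosing in each configuration (the vertex in $S$ or not, its neighbours having or lacking another neighbour in $S$, a pendent neighbour in $S$) a specific move whose preservation of $\beta$ can be verified directly. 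That case analysis is the proof; deferring it as ``where most of the case analysis will be concentrated'' leaves the argument essentially unstarted.

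Moreover, two of your steps would fail as stated. First, ``keeping the pendent-edge count, and hence $\beta$, fixed'' is a non sequitur: the number $p$ of pendent edges only gives $\beta\geq p$, so after your release-then-move step the new supertree can have strictly larger independence number (edge-releasing a non-pendent edge turns it into a pendent edge and typically creates new mutually independent pendent vertices), in which case the new graph lies outside the class and no contradiction with extremality follows; this is precisely why the paper only applies Lemma \ref{edgere} in configurations where $\beta(G')=\beta(G)$ is checked explicitly. Second, the composite ``release at $v$, then move the resulting pendent edge to $u^{*}$'' misapplies Lemma \ref{moved}: after the first operation the Perron vector changes, and maximality of $x_{u^{*}}$ in the original $T$ says nothing about the Perron entries of the intermediate supertree; likewise the claimed strictness of your final clustering 2-switch does not follow from Lemma \ref{2sw} without comparing actual Perron weights, and that 2-switch also changes $\beta$ in general. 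So the approach is in the same spirit as the paper, but each of your three reductions has a genuine gap precisely at the independence-number bookkeeping and at the chaining of spectral inequalities, which is where the paper's detailed argument does its work.
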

\begin{proof}
	Assume that $G$ is a supertree with maximum $\alpha$-spectral radius among all $k$-uniform supertrees with $m$ edges and independence number $\beta$. Then we prove that $G$ must be in $T_{m,k,\beta}$.
	
	Let $x$ be the $\alpha$-Perron vector of $G$. Suppose that $S$ is a maximum independent set containing pendent vertices as many as possible. By Lemma \ref{le3_1}, we see that $S$ contains a pendent vertex of each pendent edge in $E(G)$. Take $\Gamma$ to be the $k$-uniform supertree obtained from $G$ by deleting all pendent vertices in all pendent edges of $G$. Then we consider the following cases.
	
	\textbf{Case 1.} There is no vertex $v\in V(\Gamma)$ such that $d_{\Gamma}(v)\geq2$, i.e., $\Gamma$ only contains an isolate vertex or $\Gamma\cong S_{1,k}$.
	
	\textbf{Case 1.1.} $\Gamma$ only has an isolated vertex. Easily, we have $\beta(G)=m$ and $G\cong S_{m,k}\cong T_{m,k,m}$.
	
	\textbf{Case 1.2.} $\Gamma\cong S_{1,k}$. There exist some vertices $\{v_1,\dots,v_r\}\subset V(\Gamma)$ with $2\leq r\leq k$ such that $d_{G}(v_i)\geq2$ for $1\leq i\leq r$. Assume that there are no less than one pendent vertex of $G$ in $\Gamma$, then we have $\beta(G)=m$.  Without loss of generality, assume that $x_{v_1}\geq x_{v_2}$. Let $G'$ be the supertree obtained from $G$ by moving a pendent edge containing $v_2$ from $v_2$ to $v_1$. We can see that $G'$ is also a $k$-uniform supertree with $m$ edges and $\beta(G')=\beta(G)=m$. By Lemma \ref*{moved}, we have $\rho_{\alpha}(G')>\rho_{\alpha}(G)$, a contradiction. Hence, there is no pendent vertex of $G$ in $\Gamma$, implying $\beta(G)=m-1$ and $r=k$.
	
	Let $e=\{v_1,\dots,v_k\}$ be the single edge of $\Gamma$. It follows that $d_{G}(v_i)\geq2$ for $1\leq i \leq k$. Assume that there are some vertices $\{v_1,\dots,v_t\}\subset e$ with $2\leq t\leq k$ such that $d_G(v_i)\geq3$ for $1\leq i\leq t$ and $x_{v_1}\geq x_{v_2}$. Let $G'$ be the supertree obtained from $G$ by moving a pendent edge incident with $v_2$ from $v_2$ to $v_1$. It's obvious that $G'$ is a $k$-uniform supertree with $m$ edges and $\beta(G')=\beta(G)=m-1$. Referring to Lemma \ref{moved}, we see that $\rho_{\alpha}(G')>\rho_{\alpha}(G)$, a contradiction. Thus, $t=0,1$. There is at most one vertex of $\Gamma$ with degree at least $3$ in $G$, implying $G\cong T_{m,k,m-1}$.
	
	\textbf{Case 2.} There exist some vertices $\{v_1,\dots,v_l\}\subset V(\Gamma)$ such that $d_{\Gamma}(v_i)\geq2$ for $1\leq i\leq l$.
	
	\begin{claim}\label{ca1}
		For any $v\in\{v_1,\dots,v_l\}$, $v$ must be contained in some pendent edge of $G$.
	\end{claim}  
	\begin{proof}[Proof of Claim \ref{ca1}]
		Assume that there exists a vertex $v\in\{v_1,\dots,v_l\}$ such that $v$ is not in any pendent edge of $G$. Then we consider the following two cases.
	
		\textbf{(i)} $v\in S$. For an edge $e\in E(G)$ with $v\in e$, there is no pendent vertex in $e$. Otherwise, we can replace $v$ by a pendent vertex in $e$ to get a maximum independent set $S'$ containing more pendent vertices than $S$, a contradiction.
		
		Suppose that there are two edges $e'$ and $e''$ containing $v$ with $v'\in e'$ and $v''\in e''$ such that $v$ is the unique neighbor of $v'$ ($v''$) in $S$. Then we can replace $v$ by $v',v''$ to obtain an independent set $S'$ such that $|S'|>|S|$, a contradiction. Therefore, there exists at most one edge $e'$ with $\{v,v'\}\subset e'$ such that $v$ is  the unique neighbor of $v'$ in $S$. 
		It's clear that $d_{G}(v')\geq2$ and $v'\notin S$.  We may assume that $x_{v'}\geq x_{u}$ for any $u\in e'\backslash\{v\}$. Let $G'$ be the supertree obtained from $G$ by moving all edges containing $u$ (except $e'$) from $u$ to $v'$. One can check that $G'$ is a $k$-uniform supertree with $m$ edges and $\beta(G')=\beta(G)$. By Lemma \ref{moved}, we have $\rho_{\alpha}(G')>\rho_{\alpha}(G)$, a contradiction. Thus a neighbor of $v$ is adjacent to another vertex (different from $v$) in $S$. 
		
		We take an arbitrary edge $e\in E(G)$ containing $v$. Denote $e=\{v,u_1,\dots,u_{k-1}\}$. Easily, we see that $d_{G}(u_i)\geq 2$, $u_i\notin S$, and $u_i$ has another neighbor (different from $v$) in $S$, for $1\leq i\leq k-1$. We may assume that $x_{u_1}=\max_{1\leq i\leq k-2}x_{u_i}$. Let $G''$ be the supertree obtained from $G$ by moving all edges containing $u_i$ (except $e$) from $u_i$ to $u_1$, where $2\leq i\leq k-1$. It can be checked that $G''$ is a $k$-uniform supertree with $m$ edges and $\beta(G'')=\beta(G)$. By Lemma \ref{moved}, we have $\rho_{\alpha}(G'')>\rho_{\alpha}(G)$, a contradiction.
		
		\textbf{(ii)} $v\notin S$. Suppose that $u\in S$ is a neighbor of $v$ with $d_{G}(u)\geq2$. Similarly, we conclude that there is no pendent vertex in each edge containing $u$ and a neighbor of $u$ is adjacent to another vertex (different from $u$) in $S$. Let $f=\{u,w_1,\dots,w_{k-1}\}$. One can see that $d_{G}(w_i)\geq2$ and $w_i\notin S$ for $1\leq i\leq k-1$. We may assume that $x_{w_1}=\max_{1\leq i\leq k-1}x_{w_i}$. Let $G'''$ be the supertree obtained from $G$ by moving all edges incident with $w_i$ (except $f$) from $w_i$ to $w_1$ for $2\leq i\leq k-1$. Obviously, $G'''$ is a $k$-uniform supertree with $m$ edges and $\beta(G''')=\beta(G)$. From Lemma \ref{moved}, it follows that $\rho_{\alpha}(G''')>\rho_{\alpha}(G)$, a contradiction.
		
		Suppose that there exists a pendent vertex $u'\in S$ such that $u'$ and $v$ are adjacent. Let $f'=\{v,u',y_1,\dots,y_{k-2}\}$. Note that $f'$ is a non-pendent edge. Let $G''''$ be the $k$-uniform supertree obtained from $G$ by an edge-releasing operation on $f'$ at $v$. By Lemma \ref{edgere}, we have $\rho_{\alpha}(G'''')>\rho_{\alpha}(G)$, a contradiction.
		
		This proves the Claim \ref{ca1}.
%
	\end{proof}

	By the above discussion, we take two vertices $v_1,v_2\in V(\Gamma)$ satisfying $d_{\Gamma}(v_i)\geq2$ for $i=1,2$ such that these two vertices are contained in some pendent edge of $G$. By the choice of $S$, we have $v_i\notin S$ for $i=1,2$. Without loss of generality, we suppose that $x_{v_1}\geq x_{v_2}$. Let $G'$ be the supertree obtained from $G$ by moving a non-pendent edge containing $v_2$ from $v_2$ to $v_1$. It can be checked that $G'$ is a $k$-uniform supertree with $m$ edges and $\beta(G')=\beta(G)$. By Lemma \ref{moved}, we have $\rho_{\alpha}(G')>\rho_{\alpha}(G)$, a contradiction. Hence, $l=1$. There is only one vertex $v_1\in V(\Gamma)$ such that $d_{\Gamma}(v_1)\geq 2$. It's easy to see that $\Gamma$ is a $k$-uniform hyperstar with the center $v_1$ and $G$ is the supertree obtained from $\Gamma$ by attaching some pendent edges at vertices of $\Gamma$.
	
	Take an edge $e\in E(\Gamma)$. There is at least one pendent edge at some vertex in $e\backslash\{v_1\}$. We may assume that there are $a$ pendent edges at $b$ vertices of $e\backslash\{v_1\}$ in $G$, where $a\leq b$ and $1\leq b\leq k-1$. Suppose that $b\leq k-2$. Since $v_1\notin S$, we see that there exists a pendent vertex of $e$ in $S$. Let $G''$ be the supertree obtained from $G$ by an edge-releasing operation on $e$ at $v_1$. One can check that $G''$ is a $k$-uniform supertree with $m$ edges and $\beta(G'')=\beta(G)$. Combining with Lemma \ref{edgere}, we obtain $\rho_{\alpha}(G'')>\rho_{\alpha}(G)$, a contradiction. Hence, $b=k-1$.
	
	Now, it's clear that the number of non-pendent vertices of an edge of $\Gamma$ in $G$ is either 1 or $k$. Suppose that there exists a vertex $u\in V(\Gamma)\backslash\{v_1\}$ satisfying $d_{G}(u)\geq3$. If $x_{u}\geq x_{v_1}$, let $G'''$ be the supertree obtained from $G$ by moving a pendent edge at $v_1$ from $v_1$ to $u$, and if $x_{u}<x_{v_1}$, let $G''''$ be the supertree obtained from $G$ by moving a pendent edge at $u$ from $u$ to $v_1$. Obviously, $G'''$ and $G''''$ are both $k$-uniform supertree with $m$ edges and $\beta(G''')=\beta(G'''')=\beta(G)$. By Lemma \ref{moved}, we have $\rho_{\alpha}(G''')>\rho_{\alpha}(G)$ and $\rho_{\alpha}(G'''')>\rho_{\alpha}(G)$, a contradiction. Therefore, for any $u\in V(\Gamma)\backslash\{v_1\}$, $d_{G}(u)=2$, which implies that $G\cong T_{m,k,\beta}$.
	
	As can be seen, $T_{m,k,\beta}$ is the unique supertree with maximum $\alpha$-spectral radius among all $k$-uniform supertrees with $m$ edges and independence number $\beta$. This completes the proof.
\end{proof}

\section{The $\alpha$-spectral radius of supertrees with given degree sequences}
	In this section, we characterize the unique supertree with maximum $\alpha$-spectral radius among all $k$-uniform supertrees with given degree sequences. We first prove a necessary lemma.
	
	\begin{definition}\cite{XiaoWan and Lu}
		Let $\pi=(d_0,d_1,\dots,d_{n-1})$ be a non-increasing degree sequence. 
		The graph family $\mathbb{T}_{\pi}$ is the set containing all $k$-uniform supertrees with $\pi$ as their degree sequence.
	\end{definition}

	\begin{lemma}\label{le4_2}
		Let $G$ be a $k$-uniform supertree with the maximum $\alpha$-spectral radius in $\mathbb{T}_{\pi}$ and let $x$ be the $\alpha$-Perron vector of $G$. For $u,v\in V(G)$, if $d_{G}(u)>d_{G}(v)$, then $x_{u}>x_{v}$.
	\end{lemma}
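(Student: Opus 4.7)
The plan is to argue by contradiction: assume $d_G(u) > d_G(v)$ but $x_u \leq x_v$, and construct a supertree $G' \in \mathbb{T}_\pi$ with $\rho_\alpha(G') > \rho_\alpha(G)$, contradicting the extremality of $G$. The natural tool is the 2-switching operation of Definition \ref{def3} combined with Lemma \ref{2sw}, since 2-switches preserve the degree sequence and therefore keep us inside $\mathbb{T}_\pi$.

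First, I would exploit the eigenvalue equations at $u$ and $v$,
\begin{equation*}
(\rho_\alpha(G) - \alpha d_G(w))\, x_w^{k-1} = (1-\alpha)\sum_{e \ni w} x_{e\setminus\{w\}},\qquad w\in\{u,v\},
\end{equation*}
together with the hypotheses $x_u\leq x_v$ and $d_G(u)>d_G(v)$ and the standard Perron bound $\rho_\alpha(G)>\alpha d_G(u)$, to derive the auxiliary strict inequality $\sum_{e\ni u} x_{e\setminus\{u\}} < \sum_{f\ni v} x_{f\setminus\{v\}}$. Indeed, subtracting the two eigenvalue equations yields $(\rho_\alpha-\alpha d_G(v))x_v^{k-1}-(\rho_\alpha-\alpha d_G(u))x_u^{k-1}=(1-\alpha)(A_v-A_u)$ with $A_w=\sum_{e\ni w}x_{e\setminus\{w\}}$, and the left-hand side is positive because $\rho_\alpha-\alpha d_G(v)>\rho_\alpha-\alpha d_G(u)>0$ and $x_v^{k-1}\geq x_u^{k-1}$. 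This strict inequality is the quantitative lever that will force a strict increase in $\rho_\alpha$ once the right swap is in place.

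Second, I would apply a 2-switch. Since $d_G(u)>d_G(v)$ and any two edges of the supertree $G$ share at most one vertex, there is at least one edge $e\in E(G)$ with $u\in e$, $v\notin e$; in the generic case there is also $f\in E(G)$ with $v\in f$, $u\notin f$. I perform the 2-switch $e\xrightleftharpoons[V_1]{U_1}f$ with $U_1=e\setminus\{u\}$ and $V_1=f\setminus\{v\}$, so that $U_2=\{u\}$ and $V_2=\{v\}$; then the condition $x_{U_2}\leq x_{V_2}$ of Lemma \ref{2sw} is just $x_u\leq x_v$, immediate from the hypothesis. I would select $(e,f)$ to satisfy the remaining condition $x_{e\setminus\{u\}}\geq x_{f\setminus\{v\}}$, with existence of such a pair forced by the auxiliary inequality together with the strict inequality in edge-counts coming from $d_G(u)>d_G(v)$. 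Choosing $e$ and $f$ on the unique $u$--$v$ path of $G$ (say the first and last edges of the path when $u,v$ are non-adjacent) then makes $G'$ connected, and therefore a supertree by the count $|V(G')|=m(k-1)+1$. Lemma \ref{2sw} finally delivers $\rho_\alpha(G')>\rho_\alpha(G)$, the desired contradiction.

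The main obstacle is the simultaneous control of the 2-switch conditions of Lemma \ref{2sw} and the supertree property of $G'$; in particular, the claim that a valid pair $(e,f)$ always exists requires a careful case analysis on the distance $d_G(u,v)$ and on the distribution of the partner-products $x_{e\setminus\{u\}}$ along the $u$--$v$ path. A second subtlety is the degenerate case where $d_G(v)=1$ and $v$'s unique edge already contains $u$, so that no $f$ with $v\in f$, $u\notin f$ exists; this has to be handled by a separate argument directly from the eigenvalue equation at $v$, possibly combined with Lemma \ref{moved} or Lemma \ref{edgere}, and the boundary equality case $x_u=x_v$ must be excluded using the strict auxiliary inequality to avoid a merely non-strict conclusion from Lemma \ref{2sw}.
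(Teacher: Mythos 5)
Your plan is a genuinely different route from the paper (a degree-preserving 2-switch via Lemma \ref{2sw} instead of edge moving), but it has a real gap at its load-bearing step: the existence of an admissible pair $(e,f)$. Your switch needs simultaneously (a) $x_{e\setminus\{u\}}\ge x_{f\setminus\{v\}}$ (given $x_u\le x_v$), and (b) a choice of $e\ni u$, $f\ni v$ for which $G'$ is still a supertree; note that if exactly one of $e,f$ lies on the unique $u$--$v$ path the switch disconnects $G$, so you are restricted to taking both on the path or both off it. You assert (a) is ``forced'' by the auxiliary inequality $\sum_{e\ni u}x_{e\setminus\{u\}}<\sum_{f\ni v}x_{f\setminus\{v\}}$ together with $d_G(u)>d_G(v)$, but that inequality points the wrong way: it is perfectly consistent with \emph{every} product $x_{e\setminus\{u\}}$ over $e\ni u$ being strictly smaller than \emph{every} product $x_{f\setminus\{v\}}$ over $f\ni v$ (many edges with small products at $u$ versus few edges with large products at $v$), and in that configuration no pair satisfies your orientation of Lemma \ref{2sw}, while the reversed orientation would need $x_u\ge x_v$, which your contradiction hypothesis excludes (except in the equality case). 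Averaging does not rescue it: $A_u<A_v$ and $d_G(u)>d_G(v)$ give $A_u/d_G(u)<A_v/d_G(v)$, again the wrong direction. So the ``careful case analysis'' you defer is precisely where the proof can fail, not a technicality. Two smaller issues: your strict auxiliary inequality breaks down at $\alpha=0$ with $x_u=x_v$ (then $\rho_\alpha-\alpha d_G(v)=\rho_\alpha-\alpha d_G(u)$), and the degenerate case $d_G(v)=1$ with $v$'s unique edge containing $u$ is only flagged, not handled.

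For contrast, the paper's proof needs none of this machinery: since $G$ is a supertree there is a unique $u$--$v$ path, and with $s=d_G(u)-d_G(v)\ge 1$ one can pick $s$ edges incident with $u$, off the path and not containing $v$; moving them from $u$ to $v$ swaps the degrees of $u$ and $v$, so the resulting $G'$ is again a supertree in $\mathbb{T}_\pi$, and Lemma \ref{moved} (with target $v$ and $x_v\ge x_u$) yields the strict inequality $\rho_\alpha(G')>\rho_\alpha(G)$ at once, with no equality-case bookkeeping. If you want to keep your 2-switch route, you would still have to dispose of the ``all products at $v$ dominate'' configuration by some other operation, and the natural fix is exactly the paper's edge-moving argument.
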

	\begin{proof}
		By way of contradiction assume that $x_{u}\leq x_{v}$. Notice that $G$ is a supertree, there exists a unique path $u=v_1,e_1,\dots,e_l,v_{l+1}=v$ from $u$ to $v$. Let  $s=d_{G}(u)-d_{G}(v)$, there exist some edges $f_1,\dots,f_{s}$ (except $e_1,\dots,e_l$) incident with $u$ but not incident with $v$. Let $G'$ be the supertree obtained from $G$ by moving edges $(f_1,\dots,f_s)$ from $u$ to $v$. It can be checked that $G'$ is also a $k$-uniform supertree in $\mathbb{T}_{\pi}$. By Lemma \ref{moved}, we get $\rho_{\alpha}(G')>\rho_{\alpha}(G)$, a contradiction.
	\end{proof}

	\begin{corollary}\label{cor1}
		Let $G$ be a $k$-uniform supertree with the maximum $\alpha$-spectral radius in $\mathbb{T}_{\pi}$ and let $x$ be the $\alpha$-Perron vector of $G$. For $u,v\in V(G)$, if $x_{u}\geq x_{v}$, then $d_{G}(u)\geq d_{G}(v)$. Moreover, if $x_u=x_v$, $d_{G}(u)=d_{G}(v)$.
	\end{corollary}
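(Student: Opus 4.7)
The plan is to deduce this corollary as an immediate contrapositive of Lemma \ref{le4_2}, which supplies the implication $d_G(u) > d_G(v) \Longrightarrow x_u > x_v$. The corollary simply packages the two contrapositive readings of that fact, so no new construction is required.

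For the first assertion, I would argue by contradiction. Assume $x_u \geq x_v$ but that $d_G(u) < d_G(v)$, that is, $d_G(v) > d_G(u)$. Applying Lemma \ref{le4_2} with the roles of $u$ and $v$ swapped then yields $x_v > x_u$, contradicting the hypothesis $x_u \geq x_v$. Hence $d_G(u) \geq d_G(v)$, as claimed.

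For the "moreover" clause, suppose $x_u = x_v$ but $d_G(u) \neq d_G(v)$. Without loss of generality take $d_G(u) > d_G(v)$. Lemma \ref{le4_2} then delivers $x_u > x_v$, contradicting $x_u = x_v$. Therefore $d_G(u) = d_G(v)$.

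There is no real obstacle here: the entire content of the corollary is already encoded in Lemma \ref{le4_2}, and the only point to check is that each contrapositive step correctly inverts the strict inequality. If anything, the delicate issue would be to ensure one does not accidentally weaken the conclusion when passing between the two equivalent formulations, but since Lemma \ref{le4_2} produces a strict inequality $x_u > x_v$ from a strict inequality $d_G(u) > d_G(v)$, both the $\geq$ form and the equality form of the corollary follow cleanly.
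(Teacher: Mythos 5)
Your proof is correct and matches the paper's intent: the paper states this corollary without proof precisely because it is the immediate contrapositive of Lemma \ref{le4_2}, which is exactly the argument you give.
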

	
	Next, we introduce the definition of breadth-first-search ordering (BFS-ordering for short) and construct a special supertree with a BFS-ordering of its vertices in $\mathbb{T}_{\pi}$.
	Let $G$ be a $k$-uniform supertree with root $v_0$. For $v\in V(G)$, $d_{G}(v,v_0)$ is called the height of $v$ in $G$, written as $h(v)$. We also say that $v$ is in layer $h(v)$ of $G$.
	\begin{definition}[BFS-ordering]\label{def4_4}\cite{XiaoWan and Lu}
		Let $G$ be a $k$-uniform supertree with root $v_0$. A well-ordering $\prec$ of the vertices of $G$ is called a BFS-ordering if all the following hold (i)-(iv) for all vertices:
		\begin{itemize}
			\item[(\romannumeral1)] $u\prec v$ implies $h(u)\leq h(v)$;
			\item[(\romannumeral2)] $u\prec v$ implies $d_{G}(u)\geq d_{G}(v)$;
			\item[(\romannumeral3)] For $\{u,u'\}\subset e\in E(G)$ and $\{v,v'\}\subset f\in E(G)$ with $h(u)=h(u')+1$ and $h(v)=h(v')+1$, if $u\prec v$, then $u'\prec v'$;
			\item[(\romannumeral4)] Suppose that $v_1\prec v_2\prec\dots\prec v_k$ for every edge $e=\{v_1,v_2,\dots,v_k\}\in E(G)$, then there exists no vertex $u\in V(G)\backslash e$ such that $v_i\prec u\prec v_{i+1}$, where $2\leq i\leq k-1$. 
		\end{itemize}
%
%
%
	\end{definition}

	A $k$-uniform supertree whose vertices have a BFS-ordering is called a BFS-supertree. In \cite{XiaoWan and Lu}, Xiao et al. construct a BFS-supertree $G_{\pi}$ in $\mathbb{T}_{\pi}$ by constructing layers as follows:
	\begin{itemize}
		\item[(1)] Select a vertex $v_{0,1,1}$ as a root and begin with vertex $v_{0,1,1}$ in layer $0$. Set $c_1=d_0$.
		\item[(2)] Select $(k-1)c_1$ vertices:
		\begin{equation*}
			v_{1,1,1},\dots,v_{1,1,k-1},v_{1,2,1},\dots,v_{1,2,k-1},\dots,v_{1,c_1,1},\dots,v_{1,c_1,k-1}
		\end{equation*}
	in layer 1 such that $\{v_{0,1,1},v_{1,t,1},\dots,v_{1,t,k-1}\}\in E(G)$ for $1\leq t\leq d_{0}$. Set $c_2=\sum_{p=1}^{(k-1)c_1}d_{p}-(k-1)c_1$. 
		\item[(3)] Select $(k-1)c_2$ vertices:
		\begin{equation*}
			v_{2,1,1},\dots,v_{2,1,k-1},v_{2,2,1},\dots,v_{2,2,k-1},\dots,v_{2,c_2,1},\dots,v_{2,c_2,k-1}
		\end{equation*}
	in layer 2 such that $\{v_{1,i,j},v_{2,t,1},\dots,v_{2,t,k-1}\}\in E(G)$ for
	\begin{equation*}
		\sum_{p=1}^{(i-1)(k-1)+j-1}d_p-[(i-1)(k-1)+j-1]+1\leq t\leq \sum_{p=1}^{(i-1)(k-1)+j}d_p-[(i-1)(k-1)+j],
	\end{equation*}
	where $1\leq i\leq c_1$ and $1\leq j\leq k-1$.
	Set $c_3=\sum_{p=(k-1)c_1+1}^{(k+1)(c_1+c_2)}d_p-(k-1)c_2$.
	\item[(4)] Assume that all vertices in layer $a$ have been constructed. These vertices are
	\begin{equation*}
		v_{a,1,1},\dots,v_{a,1,k-1},v_{a,2,1},\dots,v_{a,2,k-1},\dots,v_{a,c_a,1},\dots,v_{a,c_a,k-1}.
	\end{equation*}
	Set $c_{a+1}=\sum_{p=(k-1)(c_1+\cdots+c_{a-1})+1}^{(k-1)(c_1+\cdots+c_{a})}d_p-(k-1)c_a$. Then we construct layer $a+1$. Select $(k-1)c_{a+1}$ vertices
	\begin{equation*}
		v_{a+1,1,1},\dots,v_{a+1,1,k-1},v_{a+1,2,1},\dots,v_{a+1,2,k-1},\dots,v_{a+1,c_{a+1},1},\dots,v_{a+1,c_{a+1},k-1}
	\end{equation*}
	in layer $a+1$ such that $\{v_{a,i,j},v_{a+1,t,1},\dots,v_{a+1,t,k-1}\}\in E(G)$ for 
	\begin{equation*}
		\begin{split}
		&	\sum_{p=(k-1)(c_1+\cdots+c_{a-1})+1}^{(k-1)(c_1+\cdots+c_{a-1})+(i-1)(k-1)+j-1}d_p-[(i-1)(k-1)+j-1]+1\\
		&\leq t\\
		&\leq\sum_{p=(k-1)(c_1+\cdots+c_{a-1})+1}^{(k-1)(c_1+\cdots+c_{a-1})+(i-1)(k-1)+j}d_p-[(i-1)(k-1)+j]
		\end{split}.
	\end{equation*}
	where $1\leq i\leq c_a$ and $1\leq j\leq k-1$.
	Set $c_{a+2}=\sum_{p=(k-1)(c_1+\cdots+c_a)+1}^{(k-1)(c_1+\cdots+c_{a+1})}d_p-(k-1)c_{a+1}$.
	\item[(5)] We stop in some layer if all the vertices are in the layers.
	\end{itemize}

	From Proposition 3.1 in \cite{XiaoWan and Lu}, we see that there exists a $k$-uniform supertree $G_{\pi}$ in $\mathbb{T}_{\pi}$ having BFS-ordering of vertices. In addition, any two $k$-uniform supertrees with the same degree sequence and having BFS-ordering of their vertices are isomorphic.
	
	\begin{theorem}
		For a given non-increasing degree sequence $\pi=\{d_0,d_1,\dots,d_{n-1}\}$ of some $k$-uniform supertrees, if $G$ attains the maximum $\alpha$-spectral radius in $\mathbb{T}_{\pi}$, then $G$ is a BFS-supertree.
	\end{theorem}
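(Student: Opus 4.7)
Let $G\in\mathbb{T}_\pi$ attain the maximum $\alpha$-spectral radius, and let $x$ be its $\alpha$-Perron vector. The plan is to choose a root and a total order on $V(G)$ adapted to the BFS decomposition, and then to verify each of the four conditions of Definition~\ref{def4_4} by contradiction, converting each failure into a supertree in $\mathbb{T}_\pi$ of strictly larger spectral radius via Lemma~\ref{moved} or Lemma~\ref{2sw}. Specifically, pick $v_0$ with $x_{v_0}=\max_{v\in V(G)}x_v$; by Corollary~\ref{cor1} this forces $d_G(v_0)=d_0$. Partition $V(G)$ into layers $V_0,V_1,\dots$ by distance from $v_0$, and set $u\prec v$ whenever either $h(u)<h(v)$, or $h(u)=h(v)$ and $x_u>x_v$, with the remaining ties broken so that siblings of earlier-ordered parents precede siblings of later-ordered parents. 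Property (i) then holds by construction, and the same-layer case of (ii) is immediate from Corollary~\ref{cor1}.

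For the cross-layer part of (ii), suppose toward a contradiction that $h(u)<h(v)$ but $d_G(u)<d_G(v)$, so $x_u<x_v$ by Lemma~\ref{le4_2}. Let $u'$ be the parent of $u$ and $v'$ be the parent of $v$, and consider the edges $e\ni\{u,u'\}$ and $f\ni\{v,v'\}$. The plan is to apply Lemma~\ref{2sw} to $e$ and $f$ with $U_1=\{v\}\subset f$ and $V_1=\{u\}\subset e$, which amounts to interchanging the subtrees rooted at $u$ and $v$; since $x_v>x_u$, the relevant Perron-vector inequality on the single-element sets goes the right way, and the deeper-layer entries occurring in $f\setminus\{v\}$ can be controlled against the shallower-layer entries of $e\setminus\{u\}$ by iterating the switch up the unique $v_0$--$v$ path until the configuration is in the strict regime of Lemma~\ref{2sw}. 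The resulting supertree lies in $\mathbb{T}_\pi$ and has strictly larger $\rho_\alpha$, contradicting maximality.

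Properties (iii) and (iv) are treated by the same 2-switching strategy. For (iii), if $u\prec v$ in layer $h$ have parents $u',v'$ in layer $h-1$ with $v'\prec u'$ (hence $x_{v'}\ge x_{u'}$ and $x_u\ge x_v$), the switch on $e\ni\{u,u'\}$ and $f\ni\{v,v'\}$ with $U_1=\{u'\}$, $V_1=\{v'\}$ either contradicts maximality by Lemma~\ref{2sw} or lands in the equality regime $x_{u'}=x_{v'}$ and $x_u=x_v$; by Corollary~\ref{cor1} the corresponding degrees then agree and the within-layer tie-breaking can be adjusted to satisfy (iii). For (iv), if $e=\{v_1,\dots,v_k\}\in E(G)$ has $v_1\prec\cdots\prec v_k$ but some $u\notin e$ satisfies $v_i\prec u\prec v_{i+1}$ for some $2\le i\le k-1$, then the 2-switch between $e$ and the edge at $u$ toward its parent, exchanging $u$ with $v_{i+1}$, is again either a strict improvement over $G$ or an equality case that is absorbed into the ordering.

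The main obstacle I expect is the equality case of Lemma~\ref{2sw}: the switch gives only a weak inequality, so whenever a violation of (iii) or (iv) is detected one must either realize a strict increase (contradicting maximality) or perform an order-preserving relabeling and iterate; the bookkeeping needed to show this iteration terminates with a single ordering satisfying (i)--(iv) simultaneously is delicate. A secondary subtlety is verifying, case by case, that each 2-switch between edges $e$ and $f$ actually yields a supertree in $\mathbb{T}_\pi$—that is, that connectivity and acyclicity are both preserved—when $e$ and $f$ share vertices other than the ones being exchanged. These two technical points, together with the comparison of Perron entries across layers needed for the cross-layer part of (ii), are what I expect to absorb the bulk of the work.
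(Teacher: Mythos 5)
Your toolkit is the paper's toolkit (root at the maximum Perron entry, layer decomposition, Lemma \ref{le4_2}/Corollary \ref{cor1}, and contradictions via Lemma \ref{moved} and Lemma \ref{2sw}), but the two steps you defer as ``bookkeeping'' are exactly the content of the paper's proof, and your sketch of them does not go through as stated. First, your ordering sorts each layer by the individual entries $x_v$, with parents only breaking ties. Nothing in that ordering keeps the $k-1$ children of one edge consecutive, so condition (iv) of Definition \ref{def4_4} is not satisfied by construction, and ruling out interleaved children is not a one-line switch: the paper instead orders the edges of each layer by the \emph{product} $\prod_{p=1}^{k-1}x_{v_{r+1,i,p}}$ of the Perron entries of their new vertices (conditions (b.1), (b.2)), lists each edge's children consecutively in decreasing order of $x$ ((c.1), (d.2)), and breaks equal-product ties by parent position ((c.2)). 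These product comparisons are precisely what supply the second hypothesis $x_{U_2}\le x_{V_2}$ of Lemma \ref{2sw}, which your sketch never verifies for any of the switches you propose; having $x_u<x_v$ on the singletons alone is not enough to invoke the lemma.

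Second, the cross-layer comparison (the paper's Claim \ref{ca3}, which is what ultimately gives the cross-layer part of (ii) through Corollary \ref{cor1}) cannot be settled by ``iterating the switch up the $v_0$--$v$ path until the configuration is in the strict regime'': that is a hope, not an argument, and it breaks down exactly in the case you dismiss as a secondary subtlety, namely when the two edges intersect (the deeper edge hangs off a vertex of the shallower one). There the 2-switch between those two edges is unavailable, and the paper uses genuinely different moves: it relocates all edges at the offending vertex to a pendent vertex of the earlier layer (legitimate in $\mathbb{T}_{\pi}$ precisely because the target is pendent), or, when $d_G(v_{r+1,1,1})\ge 2$, performs a 2-switch with the sibling edge $e_{r+2,2}$ below $v_{r+1,1,1}$, using the tie-breaking condition (c.2) to get the strict product inequality. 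Finally, the equality case of Lemma \ref{2sw} that worries you is handled in the paper not by an iterative relabelling scheme but by the tie-breaking rules built into the labelling ((c.2)), so no termination argument is needed. As written, your proposal identifies the right lemmas but leaves the construction that makes them applicable --- the product-ordered, edge-by-edge labelling and the intersecting-edge case analysis --- entirely unproved.
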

	\begin{proof}
		Let $x$ be the $\alpha$-Perron vector of $G$. Note that $d_0\geq d_1\geq\dots\geq d_{n-1}$ are degree of vertices in $G$. Correspondingly, we denote $V(G)=\{v_0,v_1,\dots,v_{n-1}\}$ in here. By Lemma \ref{le4_2}, we have $x_{v_0}=\max_{v\in V(G)}x_v$.
		
		Let $V_i=\{v:h(v)=i\}$ for $i=0,1,\dots,l+1$, $y_i=|V_i|$, and $z_i=\sum_{v\in V_i}d_{G}(v)$ for $i=0,1,\dots,l+1$. Then we relabel all vertices of $V(G)$ as follows \cite{XiaoWan and Lu}:
		\begin{itemize}
			\item[(1)] Relabel $v_0$ by $v_{0,1,1}$ as the root of the supertree $G$.
			\item[(2)] Relabel $(k-1)z_0$ vertices in $V(G)$ are adjacent to $v_{0,1,1}$ as
			\begin{equation*}
				v_{1,1,1},\dots,v_{1,1,k-1},v_{1,2,1},\dots,v_{1,2,k-1},\dots,v_{1,z_0,1},\dots,v_{1,z_0,k-1}.
			\end{equation*}
		These vertices satisfy following three conditions:
			\begin{itemize}
				\item[(a.1)] $e_{1,i}=\{v_{0,1,1},v_{1,i,1},\dots,v_{1,i,k-1}\}\in E(G)$ for $1\leq i\leq z_0$.
				\item[(b.1)] $\prod_{p=1}^{k-1}x_{v_{1,i,p}}\geq\prod_{p=1}^{k-1}x_{v_{1,j,p}}$ for $1\leq i<j\leq z_0$.
				\item[(c.1)] $x_{v_{1,i,p}}\geq x_{v_{1,i,q}}$ for $1\leq i\leq z_0$ and $p<q$.
			\end{itemize}
		\item[(3)] Assume that $(k-1)(z_{r-1}-y_{r-1})$ vertices in layer $r$ are relabeled, where $1\leq r\leq l$. Now, we relabel $(k-1)(z_r-y_r)$ vertices in layer $r+1$ as
		\begin{equation*}
			v_{r+1,1,1},\dots,v_{r+1,1,k-1},v_{r+1,2,1},\dots,v_{r+1,2,k-1},\dots,v_{r+1,z_r-y_r,1},\dots,v_{r+1,z_r-y_r,k-1}.
		\end{equation*}
		These vertices satisfy following conditions:
		\begin{itemize}
			\item[(a.2)] $\{v_{r,i_0,p},v_{r+1,i,1},\dots,v_{r+1,i,k-1}\}\in E(G)$ for $1\leq i\leq z_r-y_r$, where $1\leq i_0\leq z_{r-1}-y_{r-1}$ and $1\leq p\leq k-1$.
			\item[(b.2)] $\prod_{p=1}^{k-1}x_{v_{r+1,i,p}}\geq \prod_{p=1}^{k-1} x_{v_{r+1,j,p}}$ for $1\leq i<j\leq z_r-y_r$.
			\item[(c.2)] For two edges $\{v_{r,i_0,p},v_{r+1,i,1},\dots,v_{r+1,i,k-1}\}$ and $\{v_{r,j_0,q},v_{r+1,j,1},\dots,v_{r+1,j,k-1}\}$ in $E(G)$, if $\prod_{c=1}^{k-1}x_{v_{r+1,i,c}}=\prod_{c=1}^{k-1}x_{v_{r+1,j,c}}$ where $1\leq i< j\leq z_r-y_r$, then $i_0<j_0$ or $i_0=j_0$, $p\leq q$.
			\item[(d.2)] $x_{v_{r+1,i,p}}\geq x_{v_{r+1,i,q}}$ for $1\leq i\leq z_r-y_r$ and $1\leq p<q\leq k-1$.
		\end{itemize}
		\item[(4)] We stop in some layer until all vertices of $V(G)$ have been relabeled.
		\end{itemize}
	
		Then we propose a well-ordering of vertices in $G$, that is
		\begin{equation}\label{wellorder}
			v_{s,i,p}\prec v_{r,j,q} \mbox{ for }s<t \mbox{ or }s=t,i<j \mbox{ or } s=t,i=j,p<q.
		\end{equation}
	\begin{claim}\label{ca2}
		For all vertices in same layer $s$ with $s=0,1,\dots,l+1$, the inequality \begin{equation*}
			x_{v_{s,i,q}}\geq x_{v_{s,j,q}}
		\end{equation*} 
		holds for $i<j$ or $i=j,p<q$.
	\end{claim}
	\begin{proof}[Proof of Claim \ref{ca2}]
		Obviously, the Claim \ref{ca2} holds for s=0. By mathematical induction, we assume that the Claim \ref{ca2} holds for $s=r$. For $s=r+1$, by condition (d.2), we see that $x_{v_{r+1},i,p}\geq x_{v_{r+1},j,q}$ for $i=j,p<q$.
		
		Suppose that there exist two vertices $v_{r+1,i,p_0}$, $v_{r+1,j,q_0}\in V_{r+1}$ such that $x_{v_{r+1,i,p_0}}<x_{v_{r+1,j,q_0}}$, where $i<j$, $1\leq p_0,q_0\leq k-1$. Take two edges $e_{r+1,i}=\{v_{r,i_1,p_1},v_{r+1,i,1},\dots,v_{r+1,i,k-1}\}$ and $e_{r+1,j}=\{v_{r,j_1,q_1},v_{r+1,j,1},\dots,v_{r+1,j,k-1}\}$ in $E(G)$. Then we consider the following three cases.
		
		\textbf{Case 1.} $v_{r,i_1,p_1}=v_{r,j_1,q_1}$.
		
		Notice that $x_{v_{r+1,i,p_0}}<x_{v_{r+1,j,q_0}}$ and $\prod_{p=1}^{k-1}x_{v_{r+1,i,p}}\geq \prod_{q=1}^{k-1}x_{v_{r+1,j,q}}$, then we obtain that $\prod_{p=1,p\ne p_0}^{k-1}x_{v_{r+1,i,p}}>\prod_{q=1,q\ne q_0}^{k-1}x_{v_{r+1,j,q}}$. Let $G'$ be the supertree obtained from $G$ by $e_{r+1,i}\xrightleftharpoons[v_{r+1,j,q_0}]{v_{r+1,i,p_0}}e_{r+1,j}$. From Lemma \ref{2sw}, we have $\rho_{\alpha}(G')>\rho_{\alpha}(G)$, a contradiction.
		
		\textbf{Case 2.} $v_{r,i_1,p_1}\prec v_{r,j_1,q_1}$.
		
		It can be seen that $i_1<j_1$ or $i_1=j_1$, $p_1<q_1$, then we have $x_{v_{r,i_1,p_1}}\geq x_{v_{r,i_2,p_2}}$.
		Note that $x_{v_{r+1,i,p_0}}<x_{v_{r+1,j,q_0}}$ and $\prod_{p=1}^{k-1}x_{v_{r+1,i,p}}\geq \prod_{q=1}^{k-1}x_{v_{r+1,j,q}}$, then we get $x_{e_{r+1,i}\backslash\{v_{r+1,i,p_0}\}}>x_{e_{r+1,j}\backslash\{v_{r+1,j,q_0}\}}$. Let $G''$ be the supertree obtained from $G$ by $e_{r+1,i}\xrightleftharpoons[v_{r+1,j,q_0}]{v_{r+1,i,p_0}}e_{r+1,j}$. By Lemma \ref{2sw}, we have $\rho_{\alpha}(G'')>\rho_{\alpha}(G)$, a contradiction. 
		
		\textbf{Case 3.} $v_{r,j_1,q_1}\prec v_{r,i_1,p_1}$.
		
		We see that $j_1<i_1$ or $j_1=i_1$, $q_1<p_1$, which implies that $x_{v_{r,i_1,p_1}}\leq x_{v_{r,j_1,q_1}}$. Let $U_i=e_{r+1,i}\backslash\{v_{r,i_1,p_1},v_{r+1,i,p_0}\}$ and $U_j=e_{r+1,j}\backslash\{v_{r,j_1,q_1},v_{r+1,j,q_0}\}$.
		Note that $x_{v_{r+1,i,p_0}}<x_{v_{r+1,j,q_0}}$ and  $\prod_{p=1}^{k-1}x_{v_{r+1,i,p}}\geq \prod_{q=1}^{k-1}x_{v_{r+1,j,q}}$, then we get $x_{U_i}>x_{U_j}$. Let $G'''$ be the supertree obtained from $G$ by $e_{r+1,i}\xrightleftharpoons[U_j]{U_i}e_{r+1,j}$. By Lemma \ref{2sw}, we have $\rho_{\alpha}(G''')>\rho_{\alpha}(G)$, a contradiction.
		
		This prove the Claim \ref{ca2}.
	\end{proof}

	\begin{claim}\label{ca3}
	    For vertices in layers $s$ and $s+1$ with $s=0,1,\dots,l$, it follows that 
	    	$x_{v_{s,z_{s-1}-y_{s-1},k-1}}\geq x_{v_{s+1,1,1}}$
    for $s\geq1$, and
    		$x_{v_{s,1,1}}\geq x_{v_{s+1,1,1}}$
    for $s=0$.
    \end{claim}
	\begin{proof}[Proof of Claim \ref{ca3}]
		Since $x_{v_{0,1,1}}=\max_{v\in V(G)}x_v$, it's easy to see the Claim  \ref{ca3} holds for $s=0$. By mathematical induction, we assume that the Claim \ref{ca3} holds for $s=r$. For $s=r+1$, we suppose that $x_{v_{r+1,z_{r}-y_{r},k-1}}< x_{v_{r+2,1,1}}$ and take two edges $e_{r+1,z_r-y_r}=\{v_{r,i_2,p_2},v_{r+1,z_r-y_r,1},\dots,v_{r+1,z_r-y_r,k-1}\}$, $e_{r+2,1}=\{v_{r+1,j_2,q_2},v_{r+2,1,1},\dots,v_{r+2,1,k-1}\}\in E(G)$. Then we consider the following two cases.
		
		\textbf{Case 1.} $e_{r+1,z_r-y_r}\cap e_{r+2,1}=\emptyset$.
		
		From Claim \ref{ca2}, we see that $x_{v_{r,i_2,p_2}}\geq x_{v_{r,z_{r-1}-y_{r-1},k-1}}\geq x_{v_{r+1,1,1}}\geq x_{v_{r+1,j_2,q_2}}$.
		
		If  $\prod_{p=1}^{k-2}x_{v_{r+1,z_r-y_r,p}}\leq\prod_{q=1}^{k-1}x_{v_{r+2,1,q}}$, we see that $x_{v_{r,i_2,p_2}}\geq x_{v_{r+1,j_2,q_2}}$ and $x_{e_{r+1,z_r-y_r}\backslash\{v_{r,i_2,p_2}\}}<x_{e_{r+2,1}\backslash\{v_{r+1,j_2,q_2}\}}$. Let $G'$ be the supertree obtained from $G$ by $e_{r+1,z_r-y_r}\xrightleftharpoons[v_{r+1,j_2,q_2}]{v_{r,i_2,p_2}}e_{r+2,1}$. By Lemma \ref{2sw}, we have $\rho_{\alpha}(G')>\rho_{\alpha}(G)$, a contradiction.
		
		If $\prod_{p=1}^{k-2}x_{v_{r+1,z_r-y_r,p}}>\prod_{q=1}^{k-1}x_{v_{r+2,1,q}}$, we get $x_{e_{r+1,z_r-y_r}\backslash\{v_{r+1,z_r-y_r,k-1}\}}>x_{e_{r+2,1}\backslash\{v_{r+2,1,1}\}}$ and $x_{v_{r+1,z_{r}-y_{r},k-1}}< x_{v_{r+2,1,1}}$. Let $G''$ be the supertree obtained from $G$ by $e_{r+1,z_r-y_r}\xrightleftharpoons[v_{r+2,1,1}]{v_{r+1,z_{r}-y_{r},k-1}}e_{r+2,1}$. From Lemma \ref{2sw}, we have $\rho_{\alpha}(G'')>\rho_{\alpha}(G)$, a contradiction.
		
		\textbf{Case 2.} $e_{r+1,z_r-y_r}\cap e_{r+2,1}=\{v_{r+1,z_r-y_r,c}\}$, where $1\leq c\leq k-1$.
		
		If $z_r-y_r=1$, we may take a pendent vertex $v_{r,i_3,p_3}$ in layer $r$. Notice that $x_{v_{r,i_3,p_3}}\geq x_{v_{r,z_{r-1}-y_{r-1},k-1}}\geq x_{r+1,1,1}\geq x_{v_{r+1,z_r-y_r,c}}$. Let $G'$ be the supertree obtained from $G$ by moving all edges containing $v_{r+1,z_r-y_r,c}$ (except $e_{r+1,z_r-y_r}$) from $v_{r+1,z_r-y_r,c}$ to $v_{r,i_3,p_3}$. Since $v_{r,i_3,p_3}$ is a pendent vertex, $G'$ is also a $k$-uniform supertree with degree sequence $\pi$. By Lemma \ref{moved}, we get $\rho_{\alpha}(G')>\rho_{\alpha}(G)$, a contradiction.
		
		If $z_r-y_r>1$ and $d_{G}(v_{r+1,1,1})=1$, we have $x_{v_{r+1,1,1}}\geq x_{v_{r+1,z_r-y_r,c}}$. Let $G''$ be the supertree obtained from $G$ by moving all edges containing $v_{r+1,z_r-y_r,c}$ (except $e_{r+1,z_r-y_r}$) from  $v_{r+1,z_r-y_r,c}$ to $v_{r+1,1,1}$. Similarly, one can check that $G''$ is a $k$-uniform supertree with degree sequence $\pi$. By Lemma \ref{moved}, we have $\rho_{\alpha}(G'')>\rho_{\alpha}(G)$, a contradiction.
		
		If $z_r-y_r>1$ and $d_{G}(v_{r+1,1,1})\geq2$, we have $x_{v_{r+1,1,1}}\geq x_{v_{r+1,z_r-y_r,c}}$. Without loss of generality, we denote $e_{r+2,2}=\{v_{r+1,1,1},v_{r+2,2,1},\dots,v_{r+2,2,k-1}\}$. See condition (c.2), we get $\prod_{p=1}^{k-1}x_{v_{r+2,1,p}}>\prod_{p=1}^{k-1}x_{v_{r+2,2,p}}$. Let $G'''$ be the supertree obtained from $G$ by $e_{r+2,1}\xrightleftharpoons[v_{r+1,1,1}]{v_{r+1,z_r-y_r,c}}e_{r+2,2}$. From Lemma \ref{2sw}, we have $\rho_{\alpha}(G''')>\rho_{\alpha}(G)$, a contradiction.
		
		This proves the Claim \ref{ca3}.
	\end{proof}

	By Claim \ref{ca2}, Claim \ref{ca3} and Corollary \ref{cor1}, we have the following relations
	\begin{equation*}
		d_{G}(v_{s,i,p})\geq d_{G}(v_{s,j,q})
	\end{equation*}
	holds for $i<j$ or $i=j,p<q$, and 
	\begin{equation*}
		d_{G}(v_{s,z_{s-1}-y_{s-1},k-1})\geq d_{G}(v_{s+1,1,1})
	\end{equation*}
	holds for $s\geq1$. It can be check that the well-ordering \eqref{wellorder} satifies conditions (i)-(ii) in Definition \ref{def4_4}.
	
	Assume that there are four vertices $u,$ $u'$, $v$ and $v'\in V(G)$ such that $\{u,u'\}\subset e\in E(G)$, $\{v,v'\}\subset f\in E(G)$, $u\prec v$, $h(u)=h(u')+1$ and $h(v)=h(v')+1$.
	Based on condition (b.2), we get $x_{e\backslash\{u'\}}\geq x_{f\backslash\{v'\}}$. By way of contradiction assume that $v'\prec u'$, then we have $x_{u'}\leq x_{v'}$. If $x_{e\backslash\{u'\}}> x_{f\backslash\{v'\}}$, we let $G'$ be the supertree obtained from $G$ by $e\xrightleftharpoons[v']{u'}f$. From Lemma \ref{2sw}, we have $\rho_{\alpha}(G')>\rho_{\alpha}(G)$, a contradiction. If $x_{e\backslash\{u'\}}= x_{f\backslash\{v'\}}$, by condition (c.2), we get $u'\prec v'$. Given the above, the well-ordering \eqref{wellorder} satisfies condition (iii) in Definition \ref{def4_4}.
	
	From conditions (a.1), (a.2) and the definition of well-ordering \eqref{wellorder}, we see that the well-ordering \eqref{wellorder} satisfies condition (iv) in Definition \ref{def4_4}.
	
	This completes the proof.
	\end{proof}

	By Proposition 3.1 in \cite{XiaoWan and Lu}, the following theorem is easy to be obtained.
	\begin{theorem}\label{deseqmax}
		Let $\pi$ be a non-increasing degree sequence of some $k$-uniform supertrees. $G_{\pi}$ is a unique $k$-uniform supertree with maximum $\alpha$-spectral radius in $\mathbb{T}_{\pi}$.
	\end{theorem}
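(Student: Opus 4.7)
The plan is to derive Theorem \ref{deseqmax} as a direct consequence of the preceding theorem together with the structural uniqueness of BFS-supertrees established in \cite{XiaoWan and Lu}. First I would observe that $\mathbb{T}_{\pi}$ is a finite set of hypergraphs (since it is indexed by labelled supertrees on finitely many vertices with a prescribed degree sequence), so the maximum of $\rho_{\alpha}(\cdot)$ over $\mathbb{T}_{\pi}$ is attained. Let $G$ be any such maximizer.

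The preceding theorem tells us that $G$ must be a BFS-supertree, i.e. its vertices admit a well-ordering $\prec$ satisfying conditions (i)--(iv) of Definition \ref{def4_4}. On the other hand, the construction of $G_{\pi}$ through the layered procedure ((1)--(5) in Section 4) yields a BFS-supertree in $\mathbb{T}_{\pi}$ by Proposition 3.1 of \cite{XiaoWan and Lu}. That same proposition asserts that any two $k$-uniform supertrees in $\mathbb{T}_{\pi}$ equipped with a BFS-ordering of their vertices are isomorphic. Consequently $G \cong G_{\pi}$.

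Since this isomorphism holds for every maximizer $G$, the maximizer is unique up to isomorphism, and it equals $G_{\pi}$. This gives $\rho_{\alpha}(G_{\pi}) \geq \rho_{\alpha}(H)$ for every $H \in \mathbb{T}_{\pi}$ with equality only when $H \cong G_{\pi}$, which is the desired conclusion. There is no serious obstacle in this step: the heavy lifting (namely, that a maximizer admits a BFS-ordering compatible with the $\alpha$-Perron vector) was carried out in the preceding theorem using Claim \ref{ca2}, Claim \ref{ca3}, Corollary \ref{cor1}, and the 2-switching and edge-moving lemmas; the present theorem is the combinatorial packaging of that analytic fact with the isomorphism result from \cite{XiaoWan and Lu}.
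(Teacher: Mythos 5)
Your proposal is correct and follows exactly the paper's route: the paper also obtains Theorem \ref{deseqmax} by combining the preceding theorem (every maximizer in $\mathbb{T}_{\pi}$ is a BFS-supertree) with Proposition 3.1 of \cite{XiaoWan and Lu}, which gives both the existence of the BFS-supertree $G_{\pi}$ and the fact that any two supertrees in $\mathbb{T}_{\pi}$ with a BFS-ordering are isomorphic. Your added remark about finiteness guaranteeing that the maximum is attained is a harmless explicit step the paper leaves implicit.
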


\section{The $\alpha$-spectral radius of supertrees with given matching number}
	In this section, we mainly determine the unique supertree with maximum $\alpha$-spectral radius among all $k$-uniform supertrees with a given matching number. For a $k$-uniform supertree $G$ with $m$ edges and matching number $\mu$, it follows that $k\mu \leq|V(G)|=n=m(k-1)+1$, implying $\mu\leq\frac{m(k-1)+1}{k}$.
	\begin{definition}\cite{GuoanZhou srh}
		Let $H_{m,k,\mu}$ be a $k$-uniform supertree obtained from a hyperstar $S_{m-\mu+1,k}$ by attaching $\mu-1$ pendent edges to pendent vertices of $\lceil \frac{\mu-1}{k-1}\rceil$ edges in $S_{m-\mu+1,k}$ such that
		\begin{itemize}
			\item[(\romannumeral1)] if $(k-1)\mid(\mu-1)$, then one pendent edge is attached to each pendent vertex of these chosen $ \frac{\mu-1}{k-1}$ edges,
			\item[(\romannumeral2)] and if $(k-1)\nmid(\mu-1)$, then one pendent edge is attached to each pendent vertex of $ \lfloor\frac{\mu-1}{k-1}\rfloor$, and one pendent edge is attached to $(\mu-1)-\lfloor\frac{\mu-1}{k-1}\rfloor(k-1)$ pendent vertices of the remaining chosen one edge.
		\end{itemize} 
	\end{definition}

\begin{figure}[htbp]
	\centering
	\subfigure[Supertree $H_{12,3,8}$]{
		\includegraphics[width=7cm]{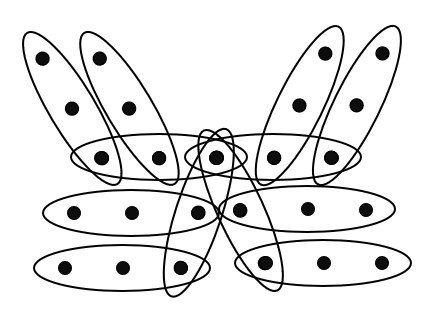}
	}
	\quad
	\subfigure[Supertree $H_{7,4,5}$]{
		\includegraphics[width=5.8cm]{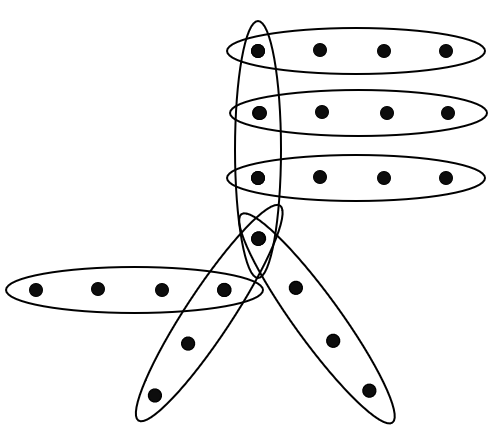}
	}
	\caption{Two supertrees in $H_{m,k,\mu}$.}
	\label{fig2}
\end{figure}

\begin{theorem}
	For $1\leq\mu\leq\lfloor\frac{m(k-1)+1}{k}\rfloor$, $H_{m,k,\mu}$ is the unique supertree with maximum $\alpha$-spectral radius among all $k$-uniform supertrees with $m$ edges and matching number $\mu$.
\end{theorem}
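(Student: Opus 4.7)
The plan is to adapt the operational strategy of the independence-number theorem to the matching-number setting. Let $G$ attain the maximum $\alpha$-spectral radius among $k$-uniform supertrees with $m$ edges and matching number $\mu$, and let $x$ be its $\alpha$-Perron vector. Using Lemmas \ref{moved}, \ref{edgere}, and \ref{2sw}, I want to reshape $G$ step by step into $H_{m,k,\mu}$, deriving a strict spectral increase (and hence a contradiction with extremality) whenever $G$ deviates from $H_{m,k,\mu}$.

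I would organize the argument in three stages. In Stage 1, let $v_0 \in V(G)$ satisfy $x_{v_0}=\max_{v \in V(G)} x_v$. I would show every non-pendent edge of $G$ passes through $v_0$: if a non-pendent edge $e$ avoids $v_0$, pick $w \in e$ with $x_w = \max_{v \in e} x_v$ and apply edge-releasing at $w$ (Lemma \ref{edgere}), so that $\rho_{\alpha}(G') > \rho_{\alpha}(G)$; a case analysis on how a maximum matching of $G$ meets $e$, combined when necessary with a compensating edge-move that reroutes a pendent edge to restore lost matching capacity, verifies $\mu(G') = \mu$, contradicting extremality. In Stage 2, parallel edge-moving arguments (Lemma \ref{moved}) force every pendent edge of $G$ either to contain $v_0$ or to be attached at a pendent vertex of some edge through $v_0$. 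Hence $G$ is a hyperstar $S_{m',k}$ at $v_0$ together with $p = m - m'$ pendent edges attached at distinct pendent vertices of the hyperstar. A direct count gives $\mu(G) = m - m' + 1$ when some hyperstar edge carries no attached pendent edge, and $\mu(G) = m - m'$ otherwise; the latter alternative is ruled out by moving one attached pendent edge $f$ from its attachment vertex $u$ to $v_0$, producing a supertree whose matching number remains $\mu$ (the relocated edge becomes a free hyperstar edge) and whose $\rho_{\alpha}$ is strictly larger by Lemma \ref{moved}, since $x_{v_0} \geq x_u$. Therefore $G$ has degree sequence $\pi = (m-\mu+1,\, 2^{\mu-1},\, 1^{n-\mu})$.

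In Stage 3, I would invoke Theorem \ref{deseqmax}: the unique maximizer of $\rho_{\alpha}$ in $\mathbb{T}_{\pi}$ is the BFS-supertree $G_{\pi}$. Running the BFS construction for $\pi$ places the $\mu - 1$ degree-$2$ vertices sequentially in layer $1$, filling the $k - 1$ pendent positions of the first hyperstar edge completely before moving on. Writing $\mu - 1 = q(k-1) + r$ with $0 \leq r < k - 1$, this yields $q$ fully attached hyperstar edges plus (when $r > 0$) one additional edge carrying $r$ attachments, which is exactly the configuration in the definition of $H_{m,k,\mu}$. A direct check confirms that $H_{m,k,\mu}$ has matching number $\mu$, so $G \cong G_{\pi} \cong H_{m,k,\mu}$.

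The main obstacle is the matching-number bookkeeping in Stage 1. Unlike the independence number, which the three operations of Section 2 preserve almost automatically, the matching number can drop under edge-releasing whenever two or more matching edges are forced to share the released vertex $w$. The fix will be to choose a maximum matching of $G$ whose edges meet the released edge $e$ only at $w$ (so the release preserves the matching), and in the problematic configurations where no such matching exists, to couple the release with a compensating edge-move that restores the lost matching edge without undoing the spectral gain. Once Stage 1 is secured, the remaining arguments are largely mechanical: Stage 2 follows from repeated applications of Lemma \ref{moved} as in the independence-number proof, and Stage 3 is a transparent translation between the BFS recipe for $\pi$ and the explicit attachment pattern defining $H_{m,k,\mu}$.
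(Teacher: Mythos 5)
Your Stage 3 and the final step of Stage 2 (pinning the degree sequence $(m-\mu+1,2,\dots,2,1,\dots,1)$ and then invoking Theorem \ref{deseqmax}) coincide with the paper's endgame, but the route you take to the degree sequence has a genuine gap in Stage 1. You propose to edge-release an arbitrary non-pendent edge $e$ avoiding $v_0$ at its maximum-weight vertex $w$, and to repair the matching number afterwards by a ``compensating edge-move.'' This repair is not established and, as described, does not work with the tools available: edge-releasing can strictly decrease $\mu$ exactly when every maximum matching meets $e$ in at least two vertices other than $w$, and in that situation the compensating move you would need (relocating some pendent edge so as to recreate a matching edge at a prescribed place) is dictated by matching considerations, not by Perron weights; Lemma \ref{moved} only yields a spectral increase when the target vertex has Perron weight at least that of the source, and you cannot guarantee this for the forced repair. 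Moreover you cannot simply add the two gains, since the second operation is performed on the modified supertree, whose $\alpha$-Perron vector is different, so the combined operation need not increase $\rho_{\alpha}$ at all. Without this, you have no contradiction and Stage 1 collapses; Stage 2 also silently assumes the attached pendent edges sit at \emph{distinct} vertices of the hyperstar (equivalently, that $v_0$ is the only vertex of degree at least $3$), which is precisely another point needing an edge-moving argument with $\mu$-preservation that you do not supply.

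The paper avoids your Stage 1 difficulty entirely by releasing only non-pendent edges that lie \emph{in} a maximum matching $M$: the other edges of $M$ are disjoint from the released edge, hence untouched by the operation, so $\mu$ is preserved for free and Lemma \ref{edgere} gives the contradiction. It then shows, by edge-moving in whichever direction the Perron weights allow, that every vertex of degree at least $2$ is covered by $M$ and that at most one vertex has degree at least $3$; these three facts force $d_G(u)=m-\mu+1$ for the max-degree vertex, exactly $\mu-1$ vertices of degree $2$, and the rest pendent, after which Theorem \ref{deseqmax} finishes as in your Stage 3. If you reorganize Stages 1--2 along these lines (release only matching edges; then control degree-$\geq 2$ vertices and degree-$\geq 3$ vertices directly, checking $\mu$-preservation for each move), your argument becomes the paper's proof; as written, the matching bookkeeping is the missing core of the proof, not a routine detail.
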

\begin{proof}
	Assume that $G$ be the supertree with maximum $\alpha$-spectral radius among $k$-uniform supertrees with $m$ edges and matching number $\mu$. Let $x$ be the $\alpha$-Perron vector. 
	
	Suppose that there exists a non-pendent edge $e\in E(G)$ in some maximum matching $M$. Take a vertex $u\in e$ and let $G'$ be the supertree obtained from $G$ by an edge-releasing operation on $e$ to $u$. One can check that $G'$ is a $k$-uniform supertree with $m$ edges and $\mu(G')=\mu(G)$. From Lemma \ref{edgere}, we have $\rho_{\alpha}(G')>\rho_{\alpha}(G)$, a contradiction. Thus, all edges in any maximum matching of $G$ are pendent edges.
	
	For a maximum matching $M$ of $G$, we assume that there exists a vertex $u\in V(G)$ with $d_{G}(u)\geq2$ such that for any $e\in M$, $u\notin e$. Let $e_1,\dots,e_t\in E(G)$ be all edges incident with $u$, where $t=d_{G}(u)$. Obviously, $e_i\notin M$ for $1\leq i\leq t$. Suppose that there is an edge $e_i$ such that all vertices of $e_i\backslash\{u\}$ are not in any edge of $M$, then we see that $M\cup \{e_i\}$ is a matching of $G$, a contradiction.
	Without loss of generality, we may assume that there exists a vertex $v_1\in e_1$ such that it's in some edge $e'$ of $M$. Notice that $e'$ is a pendent edge. If $x_{u}\geq x_{v_1}$, let $G''$ be the supertree obtained from $G$ by moving all edges incident with $v_1$ (except $e_1$) from $v_1$ to $u$, and if $x_{u}<x_{v_1}$, let $G'''$ be the supertree obtained from $G$ by moving one edge incident with $u$ (except $e_1$) from $u$ to $v$. It can be check that $G''$ and $G'''$ are both $k$-uniform supertrees with $m$ edges and $\mu(G'')=\mu(G''')=\mu(G)$. By Lemma \ref{moved}, we have $\rho_{\alpha}(G'')>\rho_{\alpha}(G)$ and $\rho_{\alpha}(G''')>\rho_{\alpha}(G)$, a contradiction. Therefore, each vertex with degree at least 2 in $G$ is contained in some edge of $M$.
	
	Suppose that there exist two vertex $u,v\in V(G)$ such that $d_{G}(u)\geq3$ and $d_{G}(v)\geq3$. We may assume that $x_{u}\geq x_{v}$. Then we can obtained a supertree $G''''$ obtained from $G$ by moving an edge not in a maximum matching $M$ incident with $v$ (different from the one containing both $u$ and $v$ if there is such an edge) from $v$ to $u$. It's easy to see that $G''''$ is a $k$-uniform supertrees with $m$ edges and $\mu(G'''')=\mu(G)$. From Lemma \ref{moved}, we have $\rho_{\alpha}(G'''')>\rho_{\alpha}(G)$, a contradiction. Thus, there is at most one vertex with degree at least 3 in $G$.
	
	Based on the above discussion, we obtain the following conclusions. Let $u$ be the vertex with maximum degree in $G$, then 
	\begin{itemize}
		\item for any vertex $v\in V(G)\backslash\{u\}$, $d_{G}(v)=1$ or $d_{G}(v)=2$;
		\item any vertex $v$ with $d_{G}(v)\geq2$ is contained in some edge of $M$;
		\item and all edges in $M$ are pendent edges.  
	\end{itemize}
	
	We see that each edge not in $M$ (if any exists) incident with $u$, implying $d_{G}(u)=m-\mu+1$. Then, one can check that there are $\mu-1$ vertices with degree 2.  Hence, the degree sequence of $G$ is $\pi_0=\{m-\mu+1,\underbrace{2,\dots,2}_{\mu-1},\underbrace{1,\dots,1}_{m(k-1)+1-\mu}\}$. By Theorem \ref{deseqmax}, we have $G\cong G_{\pi_0}\cong H_{m,k,\mu}$.
	
	This completes the proof.
	
\end{proof}
\section*{Declaration of competing interest}
The authors declare that they have no known competing financial interests or personal relationships that could have appeared to influence the work reported in this paper.

\section*{Acknowledgments}\setlength{\baselineskip}{15pt}
This work was supported by the National Natural Science Foundation of China [61773020]. The authors would like to express their sincere gratitude to the referees for their careful reading and insightful suggestions.


\begin{thebibliography}{99}
	\bibitem{Behrens} S. Behrens, C. Erbes, M. Ferrara, S.G. Hartke, B. Reiniger, H. Spinoza, C. Tomlinson, New results on degree sequences of uniform hypergraphs, Electron. J. Combin. 20 (4) (2014), \#P14.
	\bibitem{CoandDu} J. Cooper, A. Dutle, Spectra of uniform hypergraphs. Linear Algebra Appl. 436 (2012) 3268–3292.
	\bibitem{GuoanZhou srh} H. Guo, B. Zhou, On the spectral radius of uniform hypertrees, Linear Algebra Appl. 558 (2018) 236–249.
	\bibitem{Guo and Zhou} H. Guo, B. Zhou,  On the $\alpha$-spectral radius of uniform hypergraphs, Discuss. Math. Graph Theory, 40 (2020) 559–575.
	\bibitem{Kang and Yuan} L. Kang, L. Liu, L. Qi, X. Yuan, Spectral radii of two kinds of uniform hypergraphs, Appl. Math. Comput. 338 (2018) 661–668.
	\bibitem{Kang and Shan}  L. Kang, J. Wang, E. Shan, The maximum $\alpha$-spectral radius of unicyclic hypergraphs with fixed diameter, Acta. Math. Sin.-English Ser. (2022). https://doi.org/10.1007/s10114-022-0611-y.
	\bibitem{LiShao and Qi} H. Li, J. Shao, L. Qi,  The extremal spectral radii of $k$-uniform supertrees, J. Comb. Optim. 32 (2016) 741–764.
	\bibitem{Lim1} L.-H. Lim, Singular values and eigenvalues of tensors: a variational approach, in: Proceedings of the IEEE International Workshop on Computational Advances in Multi-Sensor Adaptive Processing, CAMSAP 05, vol. 1, 2005, pp. 129–132.
	\bibitem{Lim2} L.-H. Lim, Eigenvalues of tensors and some very basic spectral hypergraph theory, matrix computations and scientific computing seminar, April 16, 2008. http://www.stat.uchicago.edu/lekheng/work/mcsc2.
	\bibitem{Linandzh} H. Lin, H. Guo, B. Zhou, On the $\alpha$-spectral radius of irregular uniform hypergraphs, Linear Multilinear Algebra. 68 (2020) 265-277.
	
	\bibitem{Ni1} V. Nikiforov, Merging the $A-$ and $Q-$spectral theories, Appl. Anal. Discrete Math. 11 (2017) 81–107.
	\bibitem{OuyangQi} C. Ouyang, L. Qi, X. Yuan, The first few unicyclic and bicyclic hypergraphs with largest spectral radii, Linear Algebra Appl. 527 (2017) 141–162. 
	\bibitem{PeandZh} K. Pearson, T. Zhang, On spectral hypergraph theory of the adjacency tensor, Graphs Comb. 30 (2014) 1233–1248.
	
	\bibitem{Qi1} L. Qi, Eigenvalues of a real supersymmetric tensor, J. Symbolic Comput. 40 (2005) 1302-1324.
	\bibitem{Qi2} L. Qi, $H^+$-eigenvalues of Laplacian and signless Lapaclian tensors, Commun Math Sci. 12 (2014) 1045–1064.
	
	\bibitem{Shao} J. Shao, A general product of tensors with applications, Linear Algebra Appl. 439 (2013) 2350–2366.
	
	\bibitem{DouWang} F. Wang, H. Shan, Z. Wang, On some properties of the $\alpha$-spectral radius of $k$-uniform hypergraph, Linear
	Algebra Appl. 589 (2020) 62–79.
	\bibitem{XiaoWan and Lu} P. Xiao, L. Wang, Y. Lu, The maximum spectral radii of uniform supertrees with given degree sequences, Linear Algebra Appl. 523 (2017) 33–45.
	\bibitem{XiaoWan and Du} P. Xiao, L. Wang, Y. Du, The first two largest spectral radii of uniform supertrees with given diameter, Linear Algebra Appl. 536 (2018) 103–119.
	\bibitem{XieandCh} J. Xie, A. Chang, On the $Z$-eigenvalues of the adjacency tensors for uniform hypergraphs, Linear Algebra Appl. 430 (2013) 2195–2204.
	
	
	\bibitem{Yuan Si} X. Yuan, X. Si, L. Zhang, Ordering uniform supertrees by their spectral radii, Front. Math. China. 12 (2017) 1393–1408. 

	
	

	
	

\end{thebibliography}
\end{document}